\def\lcm{\mbox{\rm lcm}}
\def\Z{\mathbb Z}
\def\N{\mathbb N}
\def\R{\mathbb R}
\def\Q{\mathbb Q}
\def\D{\mathbb D}
\newtheorem{theorem}{Theorem}
\newtheorem*{theorem*}{Theorem \ref{gcdProperty999}'}
\newtheorem*{theorem**}{Theorem \ref{gcdProperty999}''}
\newtheorem*{proposition*}{Proposition \ref{CasPartFauxFois}'}
\newtheorem{proposition}{Proposition}
\newtheorem{corollary}{Corollary}
\newtheorem{definition}{Definition}
\begin{document}

\title{The  Field $\Q$ and the Equality $0.999\ldots = 1$ from Combinatorics of Circular Words and History of Practical Arithmetics}
\markright{$0.999\ldots = 1$ from Circular words and History}
\author{Beno\^{\i}t Rittaud and Laurent  Vivier}

\maketitle

\begin{abstract} We reconsider the classical equality $0.999\ldots = 1$ with the tool of {\em circular words}, that is: finite words whose last letter is assumed to be followed by the first one. Such circular words are naturally embedded with algebraic structures that enlight this problematic equality, allowing it to be considered in $\Q$ rather than in $\R$. We comment early history of such structures, that involves English teachers and accountants of the first part of the {\sc xviii}\textsuperscript{th} century, who appear to be the firsts to assert the equality $0.999\ldots=1$. Their level of understanding show links with Dubinsky {\em et al.}'s {\sc apos} theory in mathematics education. Eventually, we rebuilt the field $\Q$ from circular words, and provide an original proof of the fact that an algebraic integer is either an integer or an irrational number.
\end{abstract}

This article is interested in {\it circular words} (Rittaud \& Vivier, 2012b) as a tool to investigate the famous equality $0.999\ldots=1$ and, more generally, decimal expansion of rational numbers. We propose some perspectives on this famous equality integrating mathematics, history of matematics and mathematics education.

Informally speaking, a circular word  is a finite word whose last letter is assumed to be followed by the first one. (More rigorously, it is a finite word indexed by $\Z/\ell\Z$ instead of $\{1,\ldots,\ell\}$.) Such an object allows us to  remain in $\Q$ instead of $\R$ when investigating the equality $0.999\ldots=1$, hence enables to make use of purely algebraic and combinatorial structures, avoiding analysis and topology.

Section \ref{GenFacts} recalls the relevant facts about usual numeration system in base $b$, where $b\geqslant 2$ is some integer.  For convenience, most examples in the present article are provided in base ten system, that is: with the alphabet $\{0,1,\ldots, 9\}$. (Others examples are given in (Vivier, 2015).)

Sections \ref{MathEduc} and \ref{Histoire} introduce two major extra-mathematical aspects of the equality $0.999\ldots=1$, namely mathematics education and history of mathematics and calculation. A lot of investigations has been made im mathematical education since the 70's and the seminal Tall's work. We focus here on {\sc apos} theory that proposes different mental structures to understand the phenomenon. Also, the contribution made by {\sc xviii}\textsuperscript{th} century English teachers and accountants as regards periodic decimal expansion of rational numbers is mathematically significant. In particular, the two first mentions of the equality $0.999\ldots=1$ appear to come from two of these authors, namely George Brown then Samuel Cunn, with two very different viewpoints.

Section \ref{Combi} is interested in the deeper mathematical structure, setting up modern tools to describe the objects and algorithms involved, namely circular words. It begins with some definition and properties of these (in particular a combinatorial proof of Fermat's Little Theorem), then defines two sets, ${\cal Q}_{\text{\rm WCP}}$ and ${\cal Q}_{\text{\rm DC}}$, both isomorphic to $\Q$ as fields but  set up in a combinatorial way which is free of any reference to the standard construction by pairs of integers. These two sets are quite similar. The first one is best suited for educational purpose, its elements are triples made of a finite word (correspondic to the aperiodic part of the decimal expansion of a rational number), a circular one (for the periodic part), and an integer (for the positioning of the comma). The second one, more theoretical, is best suited for proofs. Its elements are pairs made of a decimal word (i.e. a finite word with a comma, for the aperiodic part)  and a circular word (for the periodic part). Since ${\cal Q}_{\text{\rm WCP}}$ and ${\cal Q}_{\text{\rm DC}}$ are ultimately proved to be fields isomorphic to $\Q$ (Section \ref{Mult}), their elements are to be regarded as combinatorial representations of rational numbers.

Section \ref{Add} deals with the additive structure on ${\cal Q}_{\text{\rm WCP}}$ and ${\cal Q}_{\text{\rm DC}}$, which makes them both isomorphic to $\Q$ as additive groups. Such an additive structure enables to free the equality $0.999\ldots =1$ from any analysis consideration linked to the topology of the real line. (All of this could be extended to the $b$-adic case with only small changes, as it is briefly mention in Section \ref{LeCasbadique}.)

Section \ref{Mult} presents the field structure of ${\cal Q}_{\text{\rm WCP}}$ and ${\cal Q}_{\text{\rm DC}}$ (multiplication and division), making them isomorphic to $\Q$ as fields. It details the limitations of such representations of rational numbers for practical purposes, then investigates some theoretical aspects, culminating in an original proof of the irrationality of numbers like $\sqrt{2}$,  $\sqrt{2}+\sqrt{3}$ and more generally all non-integer roots of a unitary polynomial.

\section{General facts about $b$-adic expansion}\label{GenFacts}

Let us recall the following fundamental result:

\begin{theorem}\label{Main} Let $b \; \in\N$ with $b>1$. A real number has a ultimately periodic $b$-expansion iff it is a rational number.
\end{theorem}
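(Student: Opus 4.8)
The plan is to prove the two implications separately, both via the division algorithm in base $b$.

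For the ``if'' direction, suppose $x\in\Q$. Since $x=\lfloor x\rfloor+\{x\}$ and the integer part contributes only finitely many digits to the left of the radix point, it suffices to treat $x\in\Q\cap[0,1)$, say $x=m/n$ with $0\leqslant m<n$. I would then run the usual long-division algorithm: put $r_0=m$ and, for $i\geqslant 0$, set $c_{i+1}=\lfloor b\,r_i/n\rfloor\in\{0,\ldots,b-1\}$ and $r_{i+1}=b\,r_i-n\,c_{i+1}=b\,r_i\bmod n$, so that $x=\sum_{i\geqslant 1}c_i b^{-i}$. Each $r_i$ lies in the finite set $\{0,1,\ldots,n-1\}$, so the pigeonhole principle yields indices $j<k\leqslant n$ with $r_j=r_k$; since $r_{i+1}$ and $c_{i+1}$ depend only on $r_i$, the sequences $(r_i)_i$ and $(c_i)_i$ are periodic from index $j$ onward with period dividing $k-j$. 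Hence this $b$-expansion of $x$ is ultimately periodic, which is enough (a $b$-expansion need not be unique, so exhibiting one suffices).

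For the ``only if'' direction, suppose $x$ has an ultimately periodic $b$-expansion; discarding the sign and the finite integer part, we may assume $x=0.c_1c_2\cdots c_k\,\overline{d_1 d_2\cdots d_p}$, the block $d_1\cdots d_p$ being repeated indefinitely. Then $y:=b^k x-\lfloor b^k x\rfloor=0.\overline{d_1\cdots d_p}$, and summing the geometric series gives $y=\sum_{j\geqslant 0}D\,b^{-p(j+1)}=D/(b^p-1)$, where $D=\sum_{i=1}^{p}d_i b^{p-i}\in\N$. Thus $y\in\Q$, hence $b^k x\in\Q$, hence $x\in\Q$.

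The argument is elementary; the only points needing a little care are the non-uniqueness of $b$-expansions (handled by noting that in the ``if'' direction the algorithm produces a valid ultimately periodic one) and the bookkeeping of the preperiod length $k$ and period length $p$, together with the legitimacy of the closed form for $y$, i.e.\ the convergence of $\sum c_i b^{-i}$ to $x$. None of this is a genuine obstacle, so I would keep the write-up brief; the main work is purely notational, which is why the statement is recalled here rather than reproved in detail.
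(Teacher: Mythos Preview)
Your proof is correct and follows essentially the same route as the paper: the ``if'' direction via long division and the pigeonhole principle on the remainders in $\{0,\ldots,n-1\}$, and the ``only if'' direction by summing the geometric series to obtain $0.\overline{d_1\cdots d_p}=D/(b^p-1)$. The paper presents these two steps more informally (and in the reverse order of labelling), but the substance is identical.
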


Assuming here the existence of a $b$-adic expansion for any real number, this standard theorem provides a remarkably simple proof of the existence of irrational numbers (consider any non-periodic sequence of digits), also suggesting that the irrational numbers are ``more numerous'' than rational ones. Somehow, Theorem \ref{Main} can also be interpreted an ``unexpected success'' for the $b$-expansion numeration systems: the latter, made up for practical arithmetics, ``unexpectedly'' provides a way for the abstract question of identifying rational numbers among real numbers. (Unfortunately, this ``success'' does not extend to non-rational numbers, since almost nothing is known about the $b$-expansion of numbers like $\sqrt{2}$ or $\pi$.)

The fact that the $b$-expansion of $u/v$ (for $u$ and $v$ integers) is periodic is convincingly proved by the application of the classical algorithm of long division: since, at each step, the remainder is an integer between $0$ and $v-1$, the pigeonhole principle implies that, after all the digits of $u$ has been considered and only $0$s are to be added in the remainder in the next steps, the same remainder will eventually appear twice, hence entering in a periodic loop (possibly reduced to an infinite sequence of quotients equal to $0$).

This proof also shows that the length of the periodic part of the $b$-expansion of $u/v$ is upper bounded by $v-1$ (the number of possible remainders, excluding $0$ and assuming $v>1$). Moreover, it provides an application of the pigeonhole principle, whose first application is generally attributed to Dirichlet, in the end of the {\sc xix}\textsuperscript{th} century, whereas the proof of Theorem \ref{Main} appears in Wallis' {\it Treatise of Algebra} of 1685 (Wallis, 1685, chapter LXXXIX), in which Wallis makes the following precision:

\begin{quote}I have insisted the more particularly on this, because I do not remember that I have found it so considered by any other.
\end{quote}

As regard the pigeonhole principle, Wallis does not state anything special about it, but the argument was already known and used at his time, its most ancient known appearence going back at least to Jean Leurechon in 1622 (Leurechon, 1622; Rittaud \& Heeffer, 2014).

To prove the converse of Theorem \ref{Main}, a simple calculation shows that a number like $0.873873873\ldots$, also written $0.\overline{873}$ in the sequel, is equal to $873/999$, and generalization to all possible type of ultimately periodic $b$-expansion is trivial.

\begin{proposition}\label{UnSurv} Let $v\geqslant 1$ be an integer and $u<v$ prime with $v$. The fraction $u/v$ admits a \underline{purely} periodic $b$-expansion iff $v$ is prime with $b$.
\end{proposition}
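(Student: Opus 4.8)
The plan is to translate the statement about $b$-expansions into a purely arithmetic statement about $v$ and $b$, using the elementary computation already recalled just above: the infinite sum $\sum_{j\ge 1}b^{-jk}=1/(b^k-1)$ shows that a purely periodic expansion $0.\overline{a_1\cdots a_k}$ of period $k$ represents exactly the number $N/(b^k-1)$, where $N=\sum_{i=1}^k a_i b^{k-i}$ is the integer whose base-$b$ digits are $a_1,\dots,a_k$; and conversely every $N$ with $0\le N<b^k$ arises this way (pad with leading zeros). So ``$u/v$ has a purely periodic $b$-expansion of period $k$'' is equivalent to ``$u/v=N/(b^k-1)$ for some integer $N$ with $0\le N<b^k$''.

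For the ``if'' direction, assume $\gcd(v,b)=1$. Then $b$ is invertible modulo $v$; let $k=\Ord_v(b)\ge 1$ be its multiplicative order, so $v$ divides $b^k-1$, say $b^k-1=vw$ with $w\ge 1$. Then $u/v=uw/(b^k-1)$, and since $0\le u<v$ we get $0\le uw<vw=b^k-1<b^k$, so $uw$ admits a base-$b$ representation on $k$ digits, say $uw=\sum_{i=1}^k a_i b^{k-i}$, whence $u/v=0.\overline{a_1\cdots a_k}$ is purely periodic.

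For the ``only if'' direction, suppose $u/v$ admits a purely periodic expansion $0.\overline{a_1\cdots a_k}$. By the above, $u/v=N/(b^k-1)$, hence $u(b^k-1)=vN$, so $v$ divides $u(b^k-1)$; since $\gcd(u,v)=1$, it follows that $v$ divides $b^k-1$, i.e. $b^k\equiv 1\pmod v$. If a prime $p$ divided both $b$ and $v$, it would divide $b^k$ and $b^k-1$, hence their difference $1$ — absurd; therefore $\gcd(b,v)=1$.

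The one delicate point — fittingly, the very $0.999\ldots=1$ phenomenon this paper is about — is the non-uniqueness of $b$-expansions: a fraction may have two expansions, one terminating and one ending in an infinite run of $(b-1)$'s, so one must check that ``admits a purely periodic expansion'' is unambiguous under the hypotheses. The assumption $u<v$ together with $\gcd(u,v)=1$ gives $0<u/v<1$ when $v\ge 2$, which excludes the boundary case $u/v=1=0.\overline{b-1}$; and a terminating fraction strictly between $0$ and $1$ never has a purely periodic alternative form, since that form always retains a nonempty aperiodic prefix. (For $v=1$ the statement is vacuous, or trivial with $u=0$, $0=0.\overline{0}$.) I expect this bookkeeping about multiple expansions, rather than the arithmetic core, to be the only real obstacle.
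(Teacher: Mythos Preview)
Your proof is correct, but the route differs from the paper's in two notable ways. First, for the ``if'' direction, you invoke the multiplicative order of $b$ modulo $v$ to produce a $k$ with $v\mid b^k-1$ and then write down the expansion directly; the paper instead runs the long-division algorithm for $1/v$, applies the pigeonhole principle to the remainders, and uses $\gcd(b,v)=1$ (via $\lcm(b,v)=bv$) to push the first repetition of a remainder all the way back to the start, thereby showing pure periodicity. Second, the paper reduces to the case $u=1$ and then bootstraps to general $u$ (for the converse, by choosing $k$ with $ku/v\in\N+1/v$), whereas you handle arbitrary $u$ in one stroke via $\gcd(u,v)=1$. Your argument is cleaner and more algebraic; the paper's is more algorithmic and in keeping with its historical theme (the explicit long division and the pigeonhole principle are both motifs it develops). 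One structural remark: the fact you use --- that some $k$ satisfies $v\mid b^k-1$ --- is exactly the content of the paper's Corollary~\ref{Yaunell}, which it derives \emph{from} this proposition; so within the paper's logical order your shortcut would need its own one-line justification (finiteness of $(\Z/v\Z)^*$), which you do supply. Your closing paragraph on the $0.\overline{\beta}=1$ ambiguity is careful but not needed for the argument as stated, since ``admits'' only requires exhibiting one purely periodic expansion.
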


By a purely periodic $b$ expansion is meant an expansion of the form $0.\overline{M}$, where $M$ is a finite sequence of digits (which may be regarded as an integer written in base $b$).

Note that, in the case $b$ is equal to ten, the converse of this proposition was proved possibly for the first time by Alexander Malcolm (1730, p. 477) (see Section \ref{AMalcolm}).

\begin{proof} We start by the case $u=1$. Assume first that $1/v$ has a purely periodic $b$-expansion, so $1/v=0.\overline{M}$, with $M$ can be identified with a positive integer. Denoting by $\ell$ the length of $M$, we have
\[\frac{1}{v}=0.\overline{M}=\sum_{n=1}^{+\infty}\frac{M}{b^{n\ell}}=\frac{M}{b^\ell-1},\]
\noindent  so  $b^{\ell-1}b-Mv=1$, so $v$ and $b$ are mutually primes (by B\'ezout's identity).

Now, assume $v$ prime with $b$. The long division algorithm applied to $1$ and $v$ provides the successive equalities
\begin{eqnarray*}
1&=&0v+1\\
1 b&=&q_1v+r_1\\
r_1b&=&q_2v+r_2\\
& \vdots&\\
r_mb&=&q_{m+1}v+r_{m+1}\\
&\vdots&
\end{eqnarray*}
\noindent with $0\leqslant q_m<b$ and $0\leqslant r_m<v$ for any $m$. By the pigeonhole principle, we can find $m\neq m'$ such that $r_{m+1}=r_{m'+1}$. The equalities $r_{m}b=q_{m+1}v+r_{m+1}$ and $r_{m'}b=q_{m'+1}v+r_{m'+1}$ thus imply $(r_{m'}-r_m)b=(q_{m'+1}-q_{m+1})v$. Since $|r_{m'}-r_m|<v$ and  $\lcm(b,v)=bv$, we must have $r_{m'}=r_m$. Hence, by induction, the $b$-expansion of $1/v$ given by the long division algorithm is purely periodic.

Consider now the general case of $u<v$ with $u$ and $v$ mutually primes. If $v$ is prime with $b$, then we already know that $1/v$ is purely periodic, so we easily get that $u/v$ is purely periodic as well. Conversely, assume that $u/v$ is purely periodic. Since $u$ and $v$ have no common divisor, we can find an integer $k$ such that $ku/v$ is of the form $n+1/v$, so $1/v$ is purely periodic, so $v$ and $b$ are mutually primes.\end{proof}

\begin{corollary}\label{Yaunell} For any integer $v$ prime with $b$, there exists $\ell>0$ such that $v$ divides $b^\ell-1$.
\end{corollary}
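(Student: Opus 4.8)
The plan is to read off this corollary as an immediate consequence of Proposition~\ref{UnSurv} together with the elementary computation already invoked in the proof of the converse direction of Theorem~\ref{Main}. Since $v$ is prime with $b$, Proposition~\ref{UnSurv} applies to the fraction $1/v$ (taking $u=1$, which is indeed prime with $v$ and satisfies $1<v$ provided $v>1$; the case $v=1$ is trivial since then $v$ divides $b^1-1$), so $1/v$ admits a purely periodic $b$-expansion $1/v=0.\overline{M}$ for some block $M$ of digits, of length $\ell>0$ say.

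Next I would make the standard geometric-series computation explicit, exactly as in the $u=1$ case of the proof of Proposition~\ref{UnSurv}: interpreting $M$ as a nonnegative integer written in base $b$, one has
\[
\frac{1}{v}=0.\overline{M}=\sum_{n=1}^{+\infty}\frac{M}{b^{n\ell}}=\frac{M}{b^{\ell}-1}.
\]
Cross-multiplying gives $b^{\ell}-1=Mv$, so that $v$ divides $b^{\ell}-1$, which is precisely the claimed statement. Note that $\ell>0$ automatically, since a period block is by definition nonempty, and indeed $M\geqslant 1$ because $1/v>0$.

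There is essentially no obstacle here: the only point requiring the slightest care is the degenerate case $v=1$, which must be handled separately (any $\ell$, e.g.\ $\ell=1$, works since $1\mid b-1$), and the observation that one is allowed to apply Proposition~\ref{UnSurv} with $u=1$. One could alternatively phrase the whole argument purely in terms of the long-division equalities displayed in the proof of Proposition~\ref{UnSurv} without ever mentioning infinite series — the periodicity $r_{m+\ell}=r_m$ with $r_m=1$ at the start of the loop yields the same divisibility relation — but invoking the already-proved Proposition~\ref{UnSurv} is the shortest route. I would therefore present the corollary with a two-line proof: apply Proposition~\ref{UnSurv} to $1/v$, write $1/v=M/(b^{\ell}-1)$, and conclude $v\mid b^{\ell}-1$.
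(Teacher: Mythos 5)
Your proposal is correct and coincides with the paper's own first proof of this corollary: apply Proposition~\ref{UnSurv} to $1/v$, write $1/v=M/(b^\ell-1)$ via the geometric series, and conclude $Mv=b^\ell-1$. Your alternative remark about arguing directly from the long-division remainders is essentially the paper's second proof, so nothing is missing.
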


In standard decimal numeration system, this means that for any integer $v\notin 2\Z\cup 5\Z$, the set $\{9,99,999,9999,\ldots\}$ contains an element which is divided by $v$. For the case of $b$ equal to ten, this quite unexpected fact was also proved by Malcolm (1730, p. 476), in the following way.

\begin{proof}[Proof 1.] Consider the $b$-expansion of $1/v$, which is purely periodic by Proposition \ref{UnSurv}. Writing it as $0.\overline{M}$ with $\ell$ for the length of the period, we therefore have $1/v=M/(b^\ell-1)$, so $Mv=b^\ell-1$ and we are done.\end{proof}

An alternative presentation makes use of the relation $0.\overline{9}=1$ in its general form in base $b$: $0.\overline{\beta}=1$ where $\beta=b-1$.

\begin{proof}[Proof 2.] Apply the long division algorithm to get the $b$-expansion of $1/v$, written as $0.\overline{\beta}/v$, to get successively:
\begin{eqnarray*}
0&=&0v+0\\
0b+\beta&=&q_1v+r_1\\
r_1b+\beta&=&q_2v+r_2\\
&\vdots&\\
r_mb+\beta&=&q_{m+1}v+r_{m+1}\\
&\vdots&
\end{eqnarray*}
\noindent with $0\leqslant q_m<b$ and $0\leqslant r_m<v$ for all $m$. By Proposition \ref{UnSurv}, the $b$-expansion obtained is purely periodic. Hence, writing $\ell$ for the length of the period, we have $r_{\ell+1}=0$ (the rest of the initial division $0=0v+0$), so $0.\beta\cdots\beta=v\times 0.q_1\cdots q_\ell$ (with $\ell$ times the digit $\beta$ in the left side). Multiplying by $b^\ell$ then gives that ${\displaystyle b^\ell-1=v\times\sum_{i=1}^\ell 10^{\ell-i}q_i}$.\end{proof}

Another important property of the sequence $(b^n-1)_n$ is the following one.

\begin{theorem}\label{gcdProperty999} Let $\ell$ and $\ell'$ be two positive integers. The smallest positive integer $n$ such that $b^n-1$ is divided by both $b^\ell-1$ and $b^{\ell'}-1$  is $n=\lcm(\ell,\ell')$.
\end{theorem}

An equivalent form is:

\begin{theorem*} For any positive integers $n$ and $\ell$, $b^n-1$ is divided by $b^\ell-1$ iff $n$ is divided by $\ell$.
\end{theorem*}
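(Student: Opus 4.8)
The plan is to prove the ``equivalent form'' (Theorem~\ref{gcdProperty999}$'$), since the main theorem follows from it immediately: if $b^n-1$ is divisible by both $b^\ell-1$ and $b^{\ell'}-1$, then by the equivalent form $\ell\mid n$ and $\ell'\mid n$, so $\lcm(\ell,\ell')\mid n$; conversely $\lcm(\ell,\ell')$ is divisible by both $\ell$ and $\ell'$, so $b^{\lcm(\ell,\ell')}-1$ is divisible by both $b^\ell-1$ and $b^{\ell'}-1$, and it is the smallest such positive integer. So everything reduces to: $b^\ell-1 \mid b^n-1 \iff \ell\mid n$.

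The easy direction is $\ell\mid n \implies b^\ell-1\mid b^n-1$. Writing $n=k\ell$, I would use the factorization
\[
b^{k\ell}-1=(b^\ell-1)\bigl(b^{\ell(k-1)}+b^{\ell(k-2)}+\cdots+b^\ell+1\bigr),
\]
i.e. the standard identity $x^k-1=(x-1)(x^{k-1}+\cdots+1)$ applied with $x=b^\ell$. This is purely a routine computation.

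For the converse, $b^\ell-1\mid b^n-1 \implies \ell\mid n$, I would do Euclidean division of $n$ by $\ell$: write $n=q\ell+r$ with $0\leqslant r<\ell$. Then
\[
b^n-1=b^r\bigl(b^{q\ell}-1\bigr)+\bigl(b^r-1\bigr).
\]
Since $b^\ell-1$ divides $b^{q\ell}-1$ (by the easy direction just proved) and divides $b^n-1$ by hypothesis, it must divide the remainder $b^r-1$. But $0\leqslant b^r-1<b^\ell-1$ because $r<\ell$ and $b>1$, so the only possibility is $b^r-1=0$, hence $r=0$, i.e. $\ell\mid n$. Alternatively, and perhaps more in the spirit of the paper, one can phrase this via the long division algorithm / periodicity: $b^\ell-1\mid b^n-1$ says the period length $\ell$ of $1/(b^\ell-1)$ ``fits'' into $n$, echoing Corollary~\ref{Yaunell} and Proposition~\ref{UnSurv}; but the clean arithmetic argument above suffices.

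The only place requiring a moment's care — the ``main obstacle,'' such as it is — is justifying that the remainder term forces $r=0$: one must note the strict inequality $b^r-1<b^\ell-1$, which uses $r<\ell$ and $b\geqslant 2$ (so that the divisor $b^\ell-1$ is genuinely positive and larger than any $b^r-1$ with $r<\ell$); this is exactly the analogue of the ``smallest remainder'' step that appears in the proof of Proposition~\ref{UnSurv}. Everything else is the elementary geometric-series factorization.
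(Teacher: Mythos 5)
Your proof is correct and follows essentially the same route as the paper: Euclidean division of the exponent $n=q\ell+r$ combined with the geometric-series identity, reducing divisibility of $b^n-1$ by $b^\ell-1$ to divisibility of $b^r-1$, which forces $r=0$ since $0\leqslant b^r-1<b^\ell-1$. The only cosmetic difference is that you treat the two implications separately while the paper handles both at once with a single identity.
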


\begin{proof} Write $n=k\ell+r$ with $0\leqslant r<\ell$ for the Euclidean division of $n$ by $\ell$. The formula for the sum of the first terms of a geometric sequence gives 
\[\sum_{i=0}^{k-1}b^{i\ell+r}=b^r\frac{b^{k\ell}-1}{b^\ell-1}=\frac{b^n-b^r}{b^\ell-1}=\frac{b^n-1}{b^\ell-1}-\frac{b^r-1}{b^\ell-1},\]
\noindent so $b^\ell-1$ divides $b^n-1$ iff it also divides $b^r-1$ (since the left side is an integer). Since $r<\ell$, this is the case iff $r= 0$.\end{proof}

The previous theorem is coined by Wallis (1685) in the following form:

\begin{theorem**} Let $v$ and $v'$ be denominators of two irreducible fractions, the former (resp. the latter) corresponding to a periodic expansion of length $\ell$ (resp. of length $\ell'$). If $v$ and $v'$ are mutually primes, then the product of the two fractions has a decimal expansion whose periodic part is of length $\lcm(\ell,\ell')$.\end{theorem**}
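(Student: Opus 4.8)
The plan is to convert the statement about period lengths into divisibility statements about the numbers $b^n-1$, where Theorem~\ref{gcdProperty999}' does the real work. First I would set up the dictionary relating periods to divisibility: if $a/m$ is an irreducible fraction with a purely periodic $b$-expansion (so $\gcd(m,b)=1$ by Proposition~\ref{UnSurv}), then writing one period as a block $M$ of length $n$ gives $a/m=M/(b^n-1)$, hence $m\mid b^n-1$; conversely, any $n$ with $m\mid b^n-1$ yields a length-$n$ block $M=a(b^n-1)/m$ (padded with leading zeros) that realizes the expansion. So the minimal period length of $a/m$ is exactly the least $n>0$ with $m\mid b^n-1$, a quantity depending on $m$ alone. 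Applied to the two given fractions, this says $\ell$ is the least integer with $v\mid b^\ell-1$ and $\ell'$ the least with $v'\mid b^{\ell'}-1$. (To allow merely ultimately periodic expansions one replaces $v$ and $v'$ by their largest divisors prime to $b$, which changes neither $\ell$ nor $\ell'$.)

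Next I would sharpen ``$\ell$ is the least $n$ with $v\mid b^n-1$'' into ``$v\mid b^n-1$ if and only if $\ell\mid n$''. One direction is immediate from Theorem~\ref{gcdProperty999}': if $\ell\mid n$ then $b^\ell-1\mid b^n-1$, so $v\mid b^\ell-1\mid b^n-1$. The other direction reuses the argument proving Theorem~\ref{gcdProperty999}', now with $v$ in place of $b^\ell-1$: writing $n=k\ell+r$ with $0\le r<\ell$, the identity $b^n-1=b^r(b^{k\ell}-1)+(b^r-1)$ together with $b^\ell-1\mid b^{k\ell}-1$ gives $v\mid b^n-1$ iff $v\mid b^r-1$, and minimality of $\ell$ forces $r=0$. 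Likewise $v'\mid b^n-1$ iff $\ell'\mid n$.

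Finally I would handle the product. From $\gcd(u,v)=\gcd(u',v')=\gcd(v,v')=1$ the reduced denominator of $uu'/(vv')$ is $vv'$ — this is where the coprimality hypothesis $\gcd(v,v')=1$ is used, together with the tacit assumption (automatic for unit fractions) that each numerator is prime to the opposite denominator. By the dictionary, the period length of the product is the least $n$ with $vv'\mid b^n-1$; since $\gcd(v,v')=1$ this is equivalent to ``$v\mid b^n-1$ and $v'\mid b^n-1$'', hence by the previous step to ``$\ell\mid n$ and $\ell'\mid n$'', i.e.\ to $\lcm(\ell,\ell')\mid n$. The smallest such $n$ is $\lcm(\ell,\ell')$, which is the claim.

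The number-theoretic core is then a one-line consequence of Theorem~\ref{gcdProperty999}' once the dictionary is available, and I do not expect trouble there. The main obstacle is instead the bookkeeping around irreducibility of $uu'/(vv')$: read for arbitrary numerators the statement is slightly too strong — for instance $\tfrac{1}{7}\cdot\tfrac{7}{11}=\tfrac{1}{11}$ has period $2\ne\lcm(6,2)$ — so one must either restrict to fractions (such as the unit fractions $1/v$ and $1/v'$ Wallis presumably has in mind) for which $vv'$ stays reduced, or else phrase the conclusion in terms of the reduced denominator of the product.
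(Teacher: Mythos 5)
Your proof is correct and follows essentially the route the paper intends: the paper gives no separate argument for this statement, presenting it only as Wallis's rephrasing of Theorem~\ref{gcdProperty999}', and your ``dictionary'' identifying the minimal period length of an irreducible fraction with the least $n$ such that the (prime-to-$b$ part of the) denominator divides $b^n-1$ is precisely the translation the paper leaves implicit. Your further observation that the statement as written is slightly too strong for non-unit numerators (e.g.\ $\tfrac{1}{7}\cdot\tfrac{7}{11}=\tfrac{1}{11}$) is a genuine and correct caveat that the paper does not address.
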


As regards the product of two rational numbers, the following Proposition, whose proof is a fancy application of standard divisibility criteria, shows that the multiplication of rational numbers is much more difficult to tackle when considering only $b$-expansions. Quite unexpected (especially considering the relative smallness of the length of the product of two fractions as given by Theorem \ref{gcdProperty999}''), it is probably sufficient in itself to explain why practitioners, after some attempts in the first part of the {\sc xviii}\textsuperscript{th} century (see Section \ref{Histoire}), eventually gave up the idea and went back to fractions and decimal approximations for their calculations. (See Theorem \ref{99FauxFois99} for a more general result in the framework of circular words.)

\begin{proposition}\label{CasPartFauxFois} In the decimal numeration system, let $0.\overline{M}$ and $0.\overline{N}$ be two rational numbers, with $M$ and $N$ of length $2$ (that is: $M$ and $N$ are written with two different digits). Let $P$ be the shortest sequence of digits such that $0.\overline{M}\times 0.\overline{N}=0.\overline{P}$. In general, the length of $P$ is equal to $198$.
\end{proposition}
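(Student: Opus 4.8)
The plan is to reduce the statement to a computation of multiplicative orders of $10$. First I would write $0.\overline{M}=M/99$ and $0.\overline{N}=N/99$, so that $0.\overline{M}\times 0.\overline{N}=MN/99^{2}=MN/9801$ (a number strictly less than $1$, since each factor is at most $0.\overline{98}<1$, and nonzero since $M,N\geqslant 1$), and I would factor $9801=99^{2}=3^{4}\cdot 11^{2}$. As $9801$ is prime with $10$, Proposition~\ref{UnSurv} ensures $MN/9801$ has a purely periodic expansion, and the shortest $P$ with $0.\overline{M}\times 0.\overline{N}=0.\overline{P}$ has length $\Ord_{d}(10)$, where $d=9801/\gcd(MN,9801)$ is the reduced denominator of $MN/9801$. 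In particular the length of $P$ always divides $\Ord_{9801}(10)$.

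The next step is to pin down the role of the hypothesis that $M$ and $N$ are each written with two \emph{different} digits. Among all two-digit strings, the multiples of $11$ are exactly the repdigits $00,11,22,\ldots,99$, which are precisely the ones excluded; hence $11$ divides neither $M$ nor $N$, so $\gcd(MN,121)=1$ and the factor $11^{2}$ of $9801$ is never cancelled when passing to lowest terms. Thus $d=121\cdot 3^{4-e}$ with $e=\min(4,v_{3}(MN))$, and --- using the elementary fact (essentially the content of Theorem~\ref{gcdProperty999}'') that the order of $10$ modulo a product of pairwise coprime integers is the $\lcm$ of the orders modulo the factors --- the length of $P$ equals $\lcm\bigl(\Ord_{121}(10),\,\Ord_{3^{4-e}}(10)\bigr)$.

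It then remains to compute a few orders. A direct computation gives $10^{11}\equiv -1\pmod{121}$, hence $\Ord_{121}(10)=22$; and $10^{3}\equiv 28\pmod{81}$ together with $10^{9}\equiv 1\pmod{81}$ yields $\Ord_{81}(10)=9$, while $\Ord_{27}(10)=3$ and $\Ord_{9}(10)=\Ord_{3}(10)=1$. Hence the length of $P$ is $\lcm(22,9)=198$ when $3\nmid MN$, it is $\lcm(22,3)=66$ when exactly one factor $3$ divides $MN$, and it is $22$ when $9\mid MN$. The value $198$ is therefore attained exactly for pairs with $3\nmid MN$; this is the ``generic'' case, the exceptional pairs being the ``thin'' family for which $3\mid MN$, and this is what ``in general'' refers to in the statement.

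The main obstacle is the arithmetic core, namely establishing $\Ord_{9801}(10)=\lcm\bigl(\Ord_{121}(10),\Ord_{81}(10)\bigr)=\lcm(22,9)=198$; the cleanest route is the congruence $10^{11}\equiv -1\pmod{121}$, which also clarifies why $11^{2}$ contributes a period of length $22$ rather than $11$. The one conceptual point to handle carefully is recognizing that the ``two different digits'' hypothesis does nothing more than prevent $11$ from dividing $M$ or $N$, so that the $121$-part of the denominator always survives in lowest terms and the \emph{only} mechanism that can shorten the period below $198$ is the power of $3$ carried by the product $MN$.
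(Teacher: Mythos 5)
Your argument is correct, and its skeleton --- writing the product as $MN/99^2$ and reducing everything to the smallest $\ell$ with $99^2\mid 10^\ell-1$ --- is the same as the paper's. The divergence is in how that $\ell$ is found. The paper proceeds by divisibility criteria: $(10^\ell-1)/9$ is the rep-unit $r_\ell=11\cdots 1$, the digit-sum rule gives $81\mid 10^\ell-1$ iff $9\mid\ell$, and the alternating-sum rule applied twice (to $r_\ell$, then to $r_\ell/11=1010\cdots 01$) gives $121\mid 10^\ell-1$ iff $22\mid\ell$; hence $\ell=\lcm(9,2,11)=198$. You instead compute the multiplicative orders $\Ord_{121}(10)=22$ (via $10^{11}\equiv-1\pmod{121}$) and $\Ord_{81}(10)=9$ and combine them by the lcm rule coming from the Chinese remainder theorem. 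The two computations are equivalent; yours is the more standard number-theoretic route, the paper's is deliberately ``elementary.'' Your version adds two genuinely useful points absent from the paper: the observation that the hypothesis of ``two different digits'' is exactly the exclusion of the multiples of $11$, so the $11^2$-part of the denominator can never cancel and the only source of degeneracy is the $3$-adic valuation of $MN$; and the complete classification of the exceptional lengths ($66$ when $v_3(MN)=1$, $22$ when $9\mid MN$), which makes the meaning of ``in general'' precise, whereas the paper only assumes $\gcd(MN,99)=1$ without noting that the $11$-part of that condition is automatic. Two cosmetic caveats: the lcm-of-orders fact you invoke is not really the content of Theorem \ref{gcdProperty999} or its variants (those concern divisibility among the numbers $b^n-1$; what you need is just CRT for the order modulo a product of coprime moduli), and the identification of the minimal length of $P$ with $\Ord_d(10)$ deserves the one-line justification that a $P$ of length $\ell$ exists iff $d\mid 10^\ell-1$, which follows from $u/d=P/(10^\ell-1)$ with $\gcd(u,d)=1$. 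Neither affects correctness.
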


\begin{proof} By Proposition \ref{UnSurv},  $0.\overline{M}$ (resp. $0.\overline{N}$) is equal to some fraction $u/v$ (resp. $u'/v'$) with $v$ (resp. $v'$) prime with $b=10$, so the product is equal to $(uu')/(vv')$. Since $vv'$ is prime with $10$, Proposition \ref{UnSurv} gives that this product can indeed be written $0.\overline{P}$ for some $P$.

The fact that $0.\overline{P}$ can also be written $0.\overline{PP}$ (hence doubling the length) justifies the reference to the minimal possible length for $P$. Write $\ell$ for it. Now, by the proof of the converse of Theorem \ref{Main}, we know that $0.\overline{M}=M/99$, that $0.\overline{N}=N/99$ and that $0.\overline{P}=P/(10^\ell-1)$. Hence, we have $MN/(99\cdot 99)=P/(10^\ell-1)$, so $(99\cdot 99)P=(10^\ell-1)MN$. Thus, whenever $MN$ is prime with $99$ (hence the ``in general'' in the statement of the Proposition) we have that $99\cdot 99$ divides $10^\ell-1$. In this case, the value of $\ell$ is  the smallest positive integer for which $10^\ell-1$ is divided by $99\cdot 99=9^2\times 11^2$.

First, $10^\ell-1$ is divided by $9$ whatever $\ell>0$ is. The quotient is equal to the rep-unit $r_\ell=11\ldots 11$ (with $\ell$ copies of the digit $1$). Since an integer belongs to $9\Z$ iff the sum of its digits belongs to $9\Z$, the rep-unit $r_\ell$ is in $9\Z$ iff $\ell\in 9\Z$.

It remains to show for which values of $\ell$ the rep-unit $r_\ell$ is divided by $11$ twice. An integer belongs to $11\Z$ iff its alternate sum also belongs to $11\Z$, so $r_\ell\in 11\Z$ iff $\ell$ is even. In this case, we have that $r_\ell/11$ is of the form $101010\ldots 101$ (with $\ell-1$ digits). By the same criterion, this latter integer be divisible by $11$ iff the number of $1$s in it belongs to $11\Z$, that is: $\ell/2\in 11\Z$.

Taken together, the preceding conditions show that $10^\ell-1$ is divisible by $99\cdot 99$ iff $\ell$ is a multiple of $9$, $2$ and $11$, so $\ell=198$.\end{proof}

Even the simple example of $(0.\overline{01})^2$ in base ten provides an example of the previous Proposition. Its periodic part is made of the concatenation of words of length $2$ in increasing order from $01$ to $97$, eventually followed by $99$ instead of $98$ (i.e.: the periodic part is $010203\cdots 95969799$).

Since the divisibility criteria by $9$ and $11$ in base ten easily extend to divibility criteria by $b-1$ and $b+1$ in base $b$, the previous proof can be generalized straightforwardly in any base, hence providing the following more general result.

\begin{proposition}\label{99EnBaseb} Let $0.\overline{M}$ and $0.\overline{N}$ be two rational numbers written in base $b$, with $M$ and $N$ of length $2$ (that is: $M$ and $N$ are written with two different digits). Let $P$ be such that $0.\overline{M}\times 0.\overline{N}=0.\overline{P}$. Most of the time, the minimal possible length for $P$ is $2(b^2-1)$.
\end{proposition}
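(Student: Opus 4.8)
The plan is to mimic the proof of Proposition~\ref{CasPartFauxFois} verbatim, replacing the base-ten-specific numbers by their base-$b$ analogues, and then to check the one place where base ten was genuinely special. As in the decimal case, Proposition~\ref{UnSurv} gives $0.\overline{M}=u/v$ and $0.\overline{N}=u'/v'$ with $v,v'$ prime with $b$, so the product is $(uu')/(vv')$ with $vv'$ prime with $b$, hence of the form $0.\overline{P}$; and writing $\ell$ for the minimal period length, the converse of Theorem~\ref{Main} gives $0.\overline{M}=M/(b^2-1)$, $0.\overline{N}=N/(b^2-1)$, $0.\overline{P}=P/(b^\ell-1)$, so $(b^2-1)^2P=(b^\ell-1)MN$. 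Whenever $MN$ is prime with $b^2-1$ — this is the ``most of the time'' — we get that $(b^2-1)^2$ divides $b^\ell-1$, and $\ell$ is the least such positive integer.

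Next I would factor the constraint. Since $b^2-1=(b-1)(b+1)$, the condition ``$(b^2-1)^2 \mid b^\ell-1$'' splits, once we note that $\gcd(b-1,b+1)$ divides $2$, into conditions on the $(b-1)$-part and the $(b+1)$-part of $b^\ell-1$ — though I would have to be careful about the small overlap at the prime $2$ when $b$ is odd. Using the same two divisibility criteria as in the decimal proof (now phrased as: $x$ is divisible by $b-1$ iff its base-$b$ digit sum is, and $x$ is divisible by $b+1$ iff its alternating base-$b$ digit sum is), the integer $(b^\ell-1)/(b-1)$ is the base-$b$ rep-unit $r_\ell=\underbrace{1\cdots1}_{\ell}$, which is divisible by $b-1$ iff $\ell\equiv 0\pmod{b-1}$; and when $\ell$ is even, $(b^\ell-1)/(b+1)$ has base-$b$ expansion $1\,0\,1\,0\cdots 0\,1$ with $\ell/2$ ones, divisible by $b+1$ iff $\ell/2\equiv 0 \pmod{b+1}$. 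So $(b^2-1)^2\mid b^\ell-1$ forces $\ell$ to be a multiple of $2$, of $b-1$ and of $b+1$, giving $\ell=\lcm(2,b-1,b+1)$. One then checks $\lcm(2,b-1,b+1)=2(b-1)(b+1)/\gcd(\text{small factors of }2)$; for generic $b$ (neither $b-1$ nor $b+1$ divisible by more than one factor of $2$ in an interfering way) this is exactly $2(b^2-1)$, explaining the qualifier ``most of the time.''

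The main obstacle — and the reason the statement hedges with ``most of the time'' rather than ``always'' — is precisely this $2$-adic bookkeeping and the genuine possibility that $MN$ is \emph{not} prime with $b^2-1$: if $MN$ shares a factor with $(b-1)(b+1)$, the product fraction $(uu')/(vv')$ need not have denominator with full multiplicity $(b-1)^2(b+1)^2$, and the period can be shorter. I would handle the clean case (where the two divisibility criteria combine multiplicatively because $b-1$ and $b+1$ contribute coprime obstructions) fully, then simply remark, as the decimal proof implicitly does, that exceptional $M,N$ give a shorter period. I would also reuse the observation that $0.\overline{P}=0.\overline{PP}$ to justify talking about the \emph{minimal} length. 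I do not expect to need anything beyond Proposition~\ref{UnSurv}, the converse of Theorem~\ref{Main}, and the two elementary digit-sum criteria; the only delicate point is making the $\lcm$ computation and the word ``most'' precise, which — consistently with the informal tone of the surrounding Propositions~\ref{CasPartFauxFois} and~\ref{99EnBaseb} — I would state rather than belabor.
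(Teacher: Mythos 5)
Your proposal matches the paper's (essentially unwritten) proof: the paper simply remarks that the divisibility criteria by $9$ and $11$ extend to $b-1$ and $b+1$ in base $b$, so that the argument of Proposition~\ref{CasPartFauxFois} carries over verbatim, which is exactly what you do. Your extra caution about the prime $2$ when $b$ is odd, and about the hedge ``most of the time,'' goes slightly beyond what the paper bothers to address and is well placed.
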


\section{0.999\ldots\ \!=\ 1 in mathematics education}\label{MathEduc}

Studies in mathematics education on the comparison between 0.999... (or $0.\overline{9}$) and 1 are numerous and old (e.g. Tall and Schwarzenberger 1978, Tall 1980, Sierpinska 1985). It is frequently regarded as a key point in the understanding of the set of real numbers, related to several and different notions: completeness (there is no ``hole'' between $1$ and $0.999\ldots$), the notion of limit, infinitesimals (between standard and non-standard analysis), the double representation of finite decimals, the impact on Euclidean geometry (with abscissa on a straight line), potential and actual infinity, etc. The importance of it also arises in high-level mathematics as for example when dealing with Cantor's diagonal argument on the non-denumerability of $\R$.

The equality $0.999\ldots=1$ is a part of mathematics but also of daily life, hence useful to understand not only in mathematical classrooms. Some times ago in Geneva, one of the authors paid with a $50$ euros banknote a good whose price was $50$ swiss francs. The shop assistant proposed an exchange rate of $1.20$ swiss franc for $1$ euro, and started a calculation to determine the amount of swiss francs he had to give back. He calculated  in euros by computing $50-(50/1.2)$, then multiplied the result by $1.2$. His calculator gave the result $9.99999$ and he proposed to give back 10 swiss francs. It is not sure whether the shop assistant understood that, when proposing the deal, he was not, in any way, rounding the result at the advantage of his client.

It should not be thought that this phenomenon comes from the rudimentary calculator used. Let's take the case of a spreadsheet to make the previous calculation (Table \ref{Roundings}). For a cell format Standard or Number with at most 13 decimals, the result is the expected one, but this is not the case with more decimals. What do we do with these writings? Were the previous results rounded? Is the exact value the one obtained with 14 decimals? In short, how to interpret the signs displayed by the computer?

\begin{table}[h!]
\centering
\begin{tabular}{c|l|c}
Cell format & (50–50/1.2)*1.2 display& Some details\\
\hline
Standard & 10&\\
Number (2 decimals)& 10.00 &\\
Number (8 decimals)& 10.00000000& 8 0s after the decimal point\\
Number (13 decimals)& 10.0000000000000&13 0s after the decimal point\\
\hdashline
Number (14 decimals)& 9.99999999999999&14 9s after the decimal point\\
Number (15 decimals)& 9.999999999999990&14 9s after the decimal point\\
Number (16 decimals)&9.9999999999999900&14 9s after the decimal point
\end{tabular}
\caption{Spreadsheet roundings for the calculation $(50-50/1.2)\times 1.2$.}
\label{Roundings}
\end{table}

All studies in mathematics education agree on the fact that it is very difficult to make the students understand the necessity of the equality $0.999\ldots=1$. According to  Weller et al. (2009), preliminary training on periodic decimal expansions contributes to the understanding and control of this equality. The difficulties that arise are about logic, construction of numbers, psychological obstacle  of the strong semiotic difference between the two sides, and conceptual complexity. Also, to preserve a distinction between $0.999\ldots$ and $1$, many scholars as well as students endorse a non-standard analysis viewpoint, writing things like $1=0.\overline{9}+0.\overline{0}1$ (see Vivier, 2011). This shows that the equality $0.999\ldots=1$ heavily relies on the algebraic structure one wishes to define (Rittaud \& Vivier, 2014).

\subsection{{\sc apos} theory}
A particularly efficient way to describe the complexity of the conceptualization required to understand $0.999\ldots=1$ is {\sc apos} theory (Arnon et al., 2014; Dubinsky et al., 2005; Weller et al., 2004). This theory emphasizes on the difficulty for a learner to go from the stage of Action (a finite number of $9$s) to the stage of Process (the digits $9$ continue forever), then to the stage of Object ($0.999\ldots$ becomes $0.\overline{9}$, a ``static'' number), on which we can operate and justify its value $1$. The biggest difficulty seems to be the transition from the stage of Process to the stage of Object. 

Interestingly enough, the different stages of the {\sc apos} theory appears in the historical development of the mathematical tools underlying $0.999\ldots=1$. Section \ref{Histoire}, in which some detailed aspects of the work by {\sc xviii}\textsuperscript{th} century accountants is presented, can therefore be regarded as showing a {\em scheme} in the {\sc apos} sense, in which periodical expansion and periodic parts (independently of the base of numeration) are considered as Objects.\\

The following quote is a good summary of the way the {\sc apos} theory understands the equality $0.\overline{9}=1$ (Dubinsky et al., 2005, pp. 261-262):

\begin{quote} An individual who is limited to a process conception of .999… may see correctly that 1 is not directly produced by the process, but without having encapsulated the process, a conception of the ``value'' of the infinite decimal is meaningless. However, if an individual can see the process as a totality, and then perform an action of evaluation on the sequence .9, .99, .999, \ldots, then it is possible to grasp the fact that the encapsulation of the process is the trancendent object. It is equal to 1 because, once .999\ldots is considered as an object, it is a matter of comparing two static objects, 1 and the object that comes from the encapsulation. It is then reasonable to think of the latter as a number so one can note that the two fixed numbers differ in absolute value by an amount less than any positive number, so this difference can only be zero.\end{quote}

Of course, this is linked with the distinction between actual and potential infinity, as Dubinsky et al. explain in their paper.

More recently, Arnon et al. (2014) suggested, specifically for $0.999\ldots=1$, the introduction of an intermediate stage between Processus and Object: {\it Totality} in which all the $9$s make a single entity. The idea is that $0.\overline{9}$ is regarded as a whole, before having access to the object itself (the number), regardless of the understanding that this number is equal to $1$. Vivier (2011) suggested a quite similar idea by making the distinction between two objects: the number and the period. Indeed, Totality can be seen as the encapsulation of the repetition process of the $9$s to produce the object we  denote by $\overline{9}$ --- in other words, the transition from potential to actual infinity. Afterwards, it remains to establish $0.\overline{9}$ as an object (a number), regardless of the mathematical details.

\subsection{Classical ways to prove the equality}
Teachers are frequently uneasy when confronted to the equality $0.999\ldots=1$. They commonly rely on calculations to justify it to their students, without these calculations being defined beforehand. The elementary combinatorial construction given in Section \ref{Combi} provides a way to overcome this difficulty, thus possibly offering an interesting tool for teachers, even if its efficiency remains to be checked.

Among the classical elementary justifications for $0.999\ldots =1$ inventoried by Tall \& Schwarzenberger (1978) we find the following ones:

\begin{description}

\item[Method 1:] We have $1/3=0.\overline{3}$, so $3\times (1/3)=3\times 0.\overline{3}$, hence $1=0.\overline{9}$.

\item[Method 2:] Write $10\times 0.\overline{9}=9+0.\overline{9}$ to get $9\times 0.\overline{9}=9$, hence $0.\overline{9}=1$.

\item[Method 3:] Let $a=0.\overline{9}$. Dividing $1+a$ by $2$ by the usual long division gives $(1+a)/2=a$, so $a=1$.

\end{description}

Most of the time, when presented to the classroom, none of these calculations are properly defined in the first place. All of them contain a lot of implicit assumptions. In method 1, it is assumed that $0.\overline{3}$ (and its triple) represents a rational number. In methods 2 and 3, it is assumed that the equalities between infinite expansions can be simplified under the rule $a+b=a+c\Longrightarrow b=c$.   According to Tall and Schwarzenberger (1978), method 3 is the most legitimate, since it is the only one in which calculations are rightfully made from the left to the right. In this article, it is also suggested the following alternative explanation for $0.\overline{9}=1$: we have $1/9=0.\overline{1}$, $2/9=0.\overline{2}$ and so on until $8/9=0.\overline{8}$, hence $9/9=0.\overline{9}$. This could be justified by the form of the long division derived from the alternative Euclidean division $a=qb+r$ in which $0<r\leqslant b$ instead of $0\leqslant r<b$. Such an alternative long division always provides the quotient in a decimal form that never ends.

An alternative proof, for which we did not find any reference, comes from geometry: on the real line (or even on the {\em rational} line), the segment $I=[0.\overline{9},1]$ has no point in its interior (since there is no decimal expansion between $0.\overline{9}$ and $1$), hence $I$ reduces to a single point. Quite convincing in itself, such an argument still needs real analysis to be properly completed.

Other procedures can be set up, involving topology and analysis, like summation of series (Njomgang Ngansop \& Durand-Guerrier, 2014 ; Tall et Vinner, 1981) or the use of the separation axiom for the standard topology of the real line $\forall\ \varepsilon>0,\ |a-b|<\varepsilon\Longrightarrow a=b$ made in (Dubinsky et al. 2005). It is this latter property that lies behind Zeno's paradox (see Fishbein 2001). More generally, Wilhelmi et al. (2007) present several ways to justify that two numbers are equal. Nevertheless, they rely explicitely on the construction of the field $\R$ and its general properties.

In fact, contrarily to a quite common belief, the equality $0.999\ldots=1$ is not necessarily linked to the structure of the real line $\R$, and can be regarded as a fundamental property of the field $\Q$ alone. Therefore, staying in $\Q$ not only focus on an essential issue, but also avoids technical considerations about analysis or topological properties of the real line.

\subsection{Students' difficulties}
Therefore, all these methods (for which maybe we should speak of {\it evidences} rather than {\it proofs}) rely on some properties of a structure already set up, in general $\R$. Moreover, several studies (Mena et al., 2014 ; Njomgang Ngansop \& Durand-Guerrier, 2014 ; Tall \& Vinner, 1981) explain that, in general, these arguments are not convincing for students, even if they frequently acknowledge their validity. The point is that the semiotic opposition between $0.\overline{9}$ and $1$ appears to be too tough.

Besides, it is highly significant that the proportion of mathematically skilled people for which $0.\overline{9}=1$ remains around 60 \%, a figure quite independent from time, country or specific preparation:

\begin{itemize}

\item At an undergraduate level, 28 students out of 43 (65~\%) assert that $0.\overline{9}<1$ (Vivier 2011) and Tall (1980) reports 20 students out of 36, so a proportion of 56 \%.

\item Mena et al. (2014) find 23 teachers and student teachers out of 40 (57.5~\%) in favor of the inequality. In the same study, the authors find 12 out of 19 teachers (63 \%) enrolled in {\it maestria} of mathematical teaching and for which $0.\overline{9}<1$.

\end{itemize}

For the sake of completeness, let us also mention the study made on a non-mathematician population of 204 students-teachers of primary school (Weller et al., 2009). This study found that 73.5~\% of them believed that the inequality holds, a significantly bigger proportion than for mathematically skilled people, but  by a  rather small margin. Unsurprisingly, the only population for which the margin is really large is the one of scholars of secondary school: 100~\% out of 113 scholars assert that $0.\overline{9}<1$  (Vivier 2011).

An explanation could be the direct opposition of $0.\overline{9}=1$ with the knowledge, firmly established for numerous years, about comparison of decimal expressions. The necessary reset of this knowledge is more difficult to do than it is for other results like $(-1)\times(-1)=1$, which does not oppose any prior knowledge.

To try and overcome this difficulty, we prove that the equality $0.\overline{9}=1$ derives from the need to make use of infinite digit sequences as numbers. Following (Rittaud \& Vivier, 2014), we argue that the equality $0.\overline{9}=1$ consists in a technology (in the sense of Chevallard (1999)), typical of $\Q$ but generally not made explicit.

An experiment made in (Rittaud \& Vivier, 2014) involved 29 undergraduate students in France and relied on ancient knowledge. After a study, somehow too fast, of the usual summation algorithm, it is  observed that if the decimal expansion of a number $a$ has a nontrivial periodic part, then the computation of $0.\overline{9}+a$ provides the same result as $1+a$ (Richmann, 1999), hence the need to assume $0.\overline{9}=1$ to preserve standard algebra. The interest of this is that it is based solely on ancient algebraic knowledge.

From another perspective, the reorganization of the knowledge of the student does not necessarily imply the acceptance of the equality $0.\overline{9}=1$. Indeed, in the context of non-standard analysis it is concievable to write $0.999\ldots<1$, the difference being an infinitesimal commonly written as $0.000\ldots 1$ (with the idea that the expression contains infinitely many $0$s) by students. (see also Margolinas 1988). Nevertheless, the theory is difficult, and despite some promising attempts to introduce it in the curriculum (Artigue 1991, Hodgson 1994), non-standard analysis remains marginal. Still, some searchers are trying to develop it as an enlightening way to look at numbers (see Katz \& Katz, 2010a, 2010b). Such a point of view is important to consider when trying to understand the way students develop personal concepts that can sometimes oppose those of the standard curriculum (Ely, 2010). For example, Manfreda Kolar \& Hodnik Čadež (2012, pp. 404-405) asked 93 primary preservice teachers the question ``what is the largest number?'' and got once the answer $99\ldots$. To the question ``What number is closest to the number $0.5$?'', 67 students\footnote{That is 68\%, which is close to the percentages presented in the previous section.} answered $0.4999\ldots$ and 3 answered $0.500\ldots 1$. These answers are regarded as a way to deal with potential infinity, but we could see them as echoing a non-standard conception of numbers as well.

\subsection{From digit representations to numbers}
The initial question is: how to make numbers from infinite sequences of digits. Such sequences of digits are not sufficient {\it per se}, as Chevallard (1989) explained in his definition of system of numbers, which contains the necessity of being able to compare and make basic operations with usual properties. Hence, the question becomes: how can we operationalize the set of decimal expressions?

In 1971, the French grade 8 curriculum (``classe de quatri\`eme'') made an attempt for this. The point was that the underlying motivation was to set up the field $\R$, hence the attempt led to difficult considerations of approximations. One can reasonably assert that such an approach remained purely theoretical, without any practical use in the classroom. Recently, Fardin \& Li (2021) proposed a more operational definition that allows to multiply two infinite sequences of digits starting from the {\it left}, hence avoiding the problem of approximation that arise when extending the usual algorithm which starts from the right. Their construction consists in a non-trivial extension of the corresponding natural idea for addition starting from the left (in which the carry is handled by looking separately the case in which the sum of the $k$-th digits is ultimately equal to $9$ for all $k$). The point is that there is no focus on a concrete method to identify the appearance of this case.

Considering the field $\Q$ instead of $\R$ allows us to consider only ultimately periodic sequences of digits (see Theorem \ref{Main}). At first, these are semiotic representations in which the signs (digits and decimal points) are numerical, and we wish to make them authentic numbers. Therefore we cannot make use of an approch like Anatriello \& Vincenzi (2019), in which operations are made using the register of fractions.

According to Yopp et al. (2011), teachers for the end grades of primary school should have some knowledge about the equality $0.\overline{9}=1$, since it has an impact on arithmetic understanding of rational numbers. Such a conclusion may be extended to a broader set of people. Indeed, it is observed in (Rittaud \& Vivier, 2014) that no student for which $0.\overline{9}=1$ shares any infinitesimal way of thinking (Margolinas, 1988) like $0.\overline{0}1$. Also, at a primary level, a teacher may have to deal with the tricky situation of $0.\overline{9}$ and $1$ considered by some pupil as a counterexample to the fact that, between two numbers, there is always a third one. 

Duval (1996) showed how important it is for a mathematical object to be understood in two different registers. Fractions and (ultimately) periodic decimal expansion are two numerical registers for rational numbers. They should be articulated to each other, whereas secondary school mostly consider only fractions. Not only this could impede the cognitive appropriation of the object, but it  also paves the way to the confusion between the mathematical object (a rational number) and the unique data structure in use to represent it (a fraction).

Inspired by Weller et al (2009), Voskoglou (2013) made an attempt to link the two different representations. In particular, his experiment aims at identifying both fractionary and decimal representations of rational numbers. At it seems, this work on several representation systems is helpful for a better understanding of the notions of rational and irrrational numbers.
 
In the framework of {\sc apos} theory, the study made by Weller et al (2009) (see also Arnon et al (2014) chapter 8) is a strong case in favour of an operationalization of periodic sequences of digits. It argues for the teaching of rational numbers in both fractional and decimal registers, proposing operations on decimal expressions with the help of a software. This software makes all the computations by the use of fractions, something the user cannot notice since the interface only shows the decimal register. The study shows a clear improving of knowledge, both quantitatively and qualitatively, about rational numbers in decimal expansion, especially as regards the two expansions of decimal numbers (experimental group of 77 individuals, control group of 127 students-professors of primary school). These results can be understood as the institution of periodic sequences of digits as numbers, since it becomes possible to perform basic arithmetic operations on them (see also Yopp et al 2011 and Vivier 2011). The object ``number'' can therefore emerge from this. The link between these numbers and fractions is done as well, an essential step to avoid the constitution of two separates and somehow ``parallel'' sets of numbers instead of only one which can be studied with two registers.

\section{Periodic expansions in the history of practical arithmetics}\label{Histoire}

The equality $0.999\ldots=1$ and its links to circular words theory is part of the history of the more general subject of the present paper, noticeably investigated by Maarten Bullynck (2009): the representation of rational numbers by the way of decimal numeration instead of fractions. The starting point is the result recalled in Theorem \ref{Main}: the decimal expansion (or, should we write, \underline{a} decimal expansion) of any (positive) real number $x$ is ultimately periodic iff $x$ is rational. It seems that it took some time before the importance or such a result is recognized. For example, Simon Stevin, in his famous 1585 text entitled {\em La Disme} (Stevin \& Girard, 1625) about decimal representation of numbers and computation, ignores the periodicity property of rational numbers (see especially Proposition IV, Nota 1). An explanation could be that it took time for mathematicians to become interested in decimal representation, which is more a subject for accountants in the first place, and that accountants themselves were more interested in decimal approximation than in theoretical considerations about numbers. Also, the almost intractable problem of the multiplication (see Proposition \ref{CasPartFauxFois} and, more generally, Theorem \ref{99FauxFois99}) could also have been a huge practical restraint.

As already mentioned in Section \ref{GenFacts}, the beginnning of deep mathematical investigations from the equivalence between rational numbers and ultimately periodic expansion, together with some complements on the size of the period, is probably due to John Wallis in his {\em Treatise of Algebra} of 1685 (Wallis, 1685, chapter LXXXIX). Wallis did not noticed the equality $0.999\ldots=1$, nor he investigated the effects of arithmetic operations on decimal expansion of rational numbers (even if his chapter VIII is quite close to this). His study is primarily about the length of the periodic part of the decimal expansion of a fraction. His short work (Wallis, 1685, p. 326-327) mainly indicates that this length $\ell$ is given by writing the denominator as $2^a\cdot 5^b\cdot v$ with biggest possible integers $a$ and $b$, then looking for the smallest $\ell$ such that $v$ divides the number $99\cdots 99$ (with $\ell$ times the digit $9$ --- see Corollary \ref{Yaunell}). He also mentions that if the decimal expansion of a fraction of denominator $u$ (resp. $u'$) has a periodic part of length $\ell$ (resp. $\ell'$) and that $u$ and $u'$ are mutually primes, then the length of the periodic part of the decimal expansion of a fraction of denominator $uu'$ is $\lcm(\ell,\ell')$. (See our Theorem \ref{gcdProperty999}.) Also, Wallis'  final remark that ``What have been said of Decimal Fractions, may, with very little alteration, be easily accomodated to Sexagesimal Fractions'' show that he perfectly understands that all his results have a base-$b$ counterpart. Also, Wallis states that, for square roots like $\sqrt{2}$, ``we have not the like recurrence of the numeral Figures in the same order'' (i.e. there is no periodic pattern in the sequence of decimals).

The history of the sequels of Theorem \ref{Main} split into two differents parts, theoretical and pratical, the latter one being the most sensible for our purpose. We will present the details of it relevant for the present article, postponing to a forthcoming paper the presentation of the full story of the consequences of Theorem \ref{Main} to what was called ``practical arithmetics'' in the {\sc xviii}\textsuperscript{th} century.  Before going into this, for the record, let us present some aspects of the theoretical aspects investigated after Wallis (see Bullynck, 2009).

Wallis' most famous followers in studying the subject are: Johann Heinrich Lambert, whose first attempt to prove the irrationality of $\pi$, before his more fruitful approach by continued fractions, consists in trying to show that its decimal expansion is not periodic; Leonhard Euler, who presents the general properties of decimal expansions rational numbers with the formalism of series (Euler, 1822)\footnote{see chapter XI, section III, points 523-524, then chapter XII - see especially the scholium in pages 174-175}; Johann Bernoulli and eventually Carl Gauss, who provides in his famous {\em Disquisitiones Arithmeticae} of 1801 the mathematical background to understand the properties of decimal expansions (or, more generally, $b$-expansions, where $b>1$ is any integer) of rational numbers: length of the periodic part, effect of the multiplication by an integer\ldots

Then the story seems to end, since Gauss proves that all that can be said on the subject mainly rely on Fermat's Little Theorem and its consequences. Nowadays, $b$-expansion of rational numbers are more regarded as recreational mathematics (as it is already the case for example in (Rademacher \& Toeplitz, 1930, pp. 113-126)). Nevertheless, alternative numeration systems investigated from the second part of the {\sc xx}\textsuperscript{th} century (with the seminal works of Rényi (1857), Parry (1960) and others) provide new scopes for these old questions (Rittaud \& Vivier, 2012; Rittaud \& Vivier, 2011; Rittaud, to appear).

Now, a more relevant part of the story surrounding Theorem \ref{Main} in our context is about mostly forgotten English writers concerned with teaching and accounting necessities during the first part of the {\sc xviii}\textsuperscript{th} century. Starting from Wallis' initial ideas, these numerous authors develop several new practical algorithms to deal with periodic decimal expansion: addition, multiplication, division, and even elevation to the $n$-th power. Here, to remain in the bounds of our purpose, we limit ourselves to a brief presentation of some of the works of five authors: Brown, Cunn, Hatton and Marsh. (The forthcoming paper on the full story involves many more authors.)

\subsection{George Brown: a mixed numeration system}\label{GBrown}

We may argue that George Brown is the first to face the equality $0.999\ldots=1$, since he is seemingly the first to operate with periodic expansions in his {\em System of decimal Arithmetick} (Brown, 1701). Nevertheless, he is not concerned with such an equality, because of his very clever interpretation of ``Infinites'' (i.e. infinite periodic expansions) that allows him to consider them as finite expressions. In his explanation of it right from the beginning of his study of infinite periodic part (p. 12), the part which is closest to correspond to $0.999\ldots=1$ is:

\begin{quote}you must reckon the figure next the Right hand of an Infinite, as Ninth parts, but not as Tenths of the next preceding Unites~; and for that cause, you must in Addition and Multiplication, carry one for every Nine of the Sum, or Product ; and in Substraction and Division, reckon upon Nine, for every one of the borrowed, or imaginary prefix.\end{quote}

Hence, when Brown writes the decimal expansion of $1/240$ as $0.00416$, this latter expression is to be interpreted as ${\displaystyle \frac{4}{10^3}+\frac{1}{10^4}+\frac{6}{9\cdot 10^4}}$.

Brown representation is an exact one, essentially equivalent to the WCP-representation given in Section \ref{Representation1}. The only issue is the ambiguity of notation that makes difficult to distinguish numbers like $3/10$ and $3/9$ (which could be both written as $0.3$). Most importantly, in Brown's mixed numeration system, in which the last digit is to be considered as ninths parts of the previous one, there is no need to consider anything close to $0.999\ldots=1$.

In some way, it is a little bit disappointing that Brown, the first real user of infinite periodic decimal expansion, is so clever that he overcomes right from the beginning any reference to the puzzling equality. The only calculation in his book in which we could potentially recognize it (even if, so, it is quite abusive to interpret it this way) is the  one of Figure \ref{Brown}, corresponding to the sum $0.001041\overline{6}+0.002083\overline{3}=0.003125$. 

\begin{figure}[h!]
\centering
\includegraphics[height=3cm]{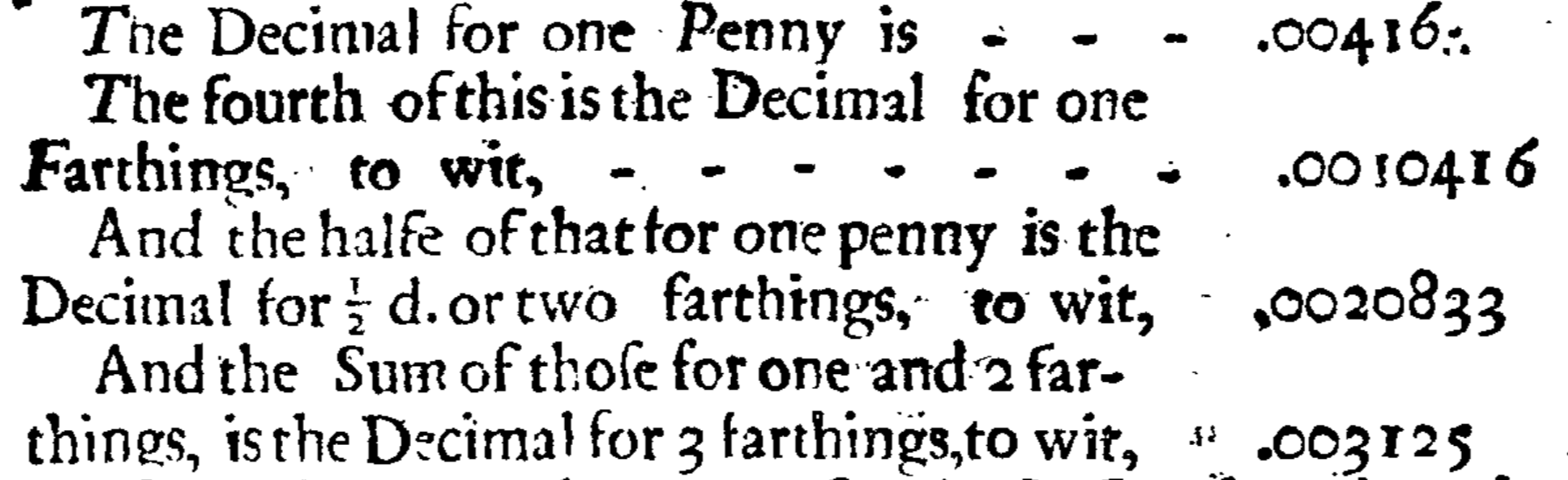}
\caption{Brown's calculation for the sum of two periodic decimal expansions.}
\label{Brown}
\end{figure}

In this figure, ``decimal for one Penny'' stands for the decimal expression of the value of one penny expressed in pound: 1 penny is one twelfth of a shilling and 1 shilling is one twentieth of a pound, so a penny is a $(1/12)\times (1/20)=1/240$-th of a pound, so $0.0041\overline{6}$.

Most of the time, Brown remains in this context of conversion between units of English money of the time, in which appears mainly periodic expansion of length $1$, frequently $\overline{3}$ or $\overline{6}$. He is at ease to make the correspondence between such an expression and a fraction, as in Figure \ref{Brown2} (p. 46) of $29.1\overline{6}$, straightforwardly identified with ${\displaystyle 29+\frac{1}{10}+\frac{2/3}{10}}$. (The written numbers in the left stand for the product $3 5\times 50=1750$ and $1750/6=29,166\ldots$.)

\begin{figure}[h!]
\centering
\includegraphics[height=2.5cm]{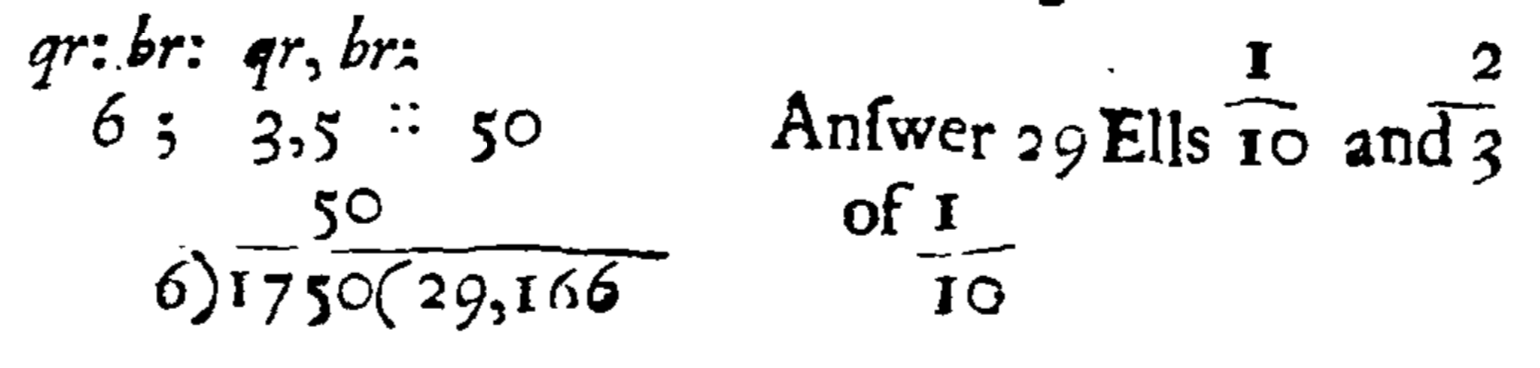}
\caption{Brown's identification of $29.1\overline{6}$ with $29+1/10+(2/3)\times(1/10)$.}
\label{Brown2}
\end{figure}

When possibly confronted with periodic parts of length greater than 1, Brown avoids it by an approximation, without any elaboration. In one case (p. 46), presented in Figure \ref{Brown3}, he uses a kind of improvisation: when computing $297.5/11$, which is $27.0\overline{45}$, Brown writes ${\displaystyle 27+\frac{1}{22}}$ instead, the fractional part being deduced from the rest $0.5$ to be divided by the divisor 11.

\begin{figure}[h!]
\centering
\includegraphics[height=1.5cm]{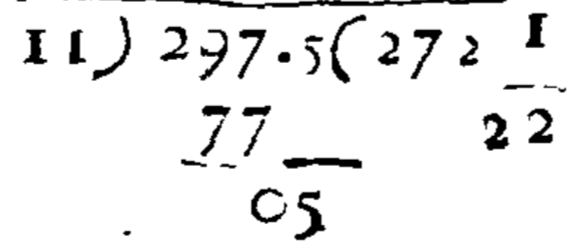}
\caption{Brown's avoidance of periodic expansion of length more than 1.}
\label{Brown3}
\end{figure}

\subsection{Samuel Cunn: operating on periodic expansions and 0.999\ldots}

The work of Samuel Cunn (Cunn, 1714) in the field of periodic decimal expansions is the second substantial one since Wallis. Contrarily to Brown, Cunn mentions the previous work of Wallis, as well as some minor considerations made by two other authors, but seems unaware of Brown's book and has a very different standpoint. Cunn's full chapter on the subject shows a clear understanding of many issues. It is quite frustrating that he does not provide any proof for the many results he states, all of them being very accurate.

In his preliminary definitions, Cunn makes the difference between decimal an non-decimal numbers, what he calls {\it terminates} and {\it interminates}, noticing two pages later that ``Every Terminate may be consider'd as Interminate, by making Cyphers the Repetend''.\footnote{He also unifies finite and periodic decimal expansions by defining the notion of {\it compleat decimal}, but without using it afterwards; he also avoids talking explicitely about aperiodic expansions, opposing compleat decimals to {\it approximate} ones, ``that hath some places true, but all the following ones uncertain''. This separation may be understood as algorithmical, in the sense that approximate decimals are those for which there is no obvious rule for their sequence of digits.} 

The way Cunn understands ultimately periodic expansions is very combinatorial, being in particular highly interested in the length of the periodic part of an expansion. Cunn's perspective is close to what we call the WCP representation of numbers in Section \ref{Representation1}. He mentions explicitely what corresponds to the shift identification and circular powers identification. He understands it both ways, that is: a repetend like 56 can be extended as 565656, but also ``if the Repetend consists of some other Repetend of fewer places, retain the latter only''. He also states in a general forms results given by Wallis, as well as the equality $0.\overline{N}=N/99\ldots 9$ (with as many $9$s as the length of $N$). He explains precisely, and with examples, how to add and substract ultimately periodic expansion, but also how to multiply and divide them.

With such an approach, it is quite inevitable that Cunn is also interested specifically in $0.999\ldots$, writing what is possibly the first explicit remark on this expression, made as a theorem stating that (Cunn, 1714, p. 63):

\begin{quote} Instead of .9999 an so on continually, put an Unit, for that is either equal to this, or else wants of it less than any thing assignable.\end{quote}

Unfortunately, this observation is nowhere followed by anything else, neither for a more rigorous proof nor in subsequent rules and examples. Hence, we cannot know for sure the reason why Cunn made such a fundamental observation. Even some of his subsequent examples which could explicitely require the identification of $0.999\ldots$ and $1$ (like $3.17\overline{6}\times 0.3=0.9530$) are treated with a method that does not need it.

After addition and substraction, Cunn goes for multiplication, which is much more difficult as we already mentioned in Proposition \ref{CasPartFauxFois} (see also Section \ref{Mult}). He states his results in increasing complexity, eventually providing complete algorithms, valid in all possible cases. 
Two statements given by Cunn (1714, p. 66) are particularly striking:
\begin{quote} If any required Root of some terminate Number be not exactly had from the Places given, it cannot be exactly had.

If any required Root of some circulating Expression doth not repeat from the Repetend once used, it cannot repeat at all.
\end{quote}

Even if these sentences are quite imprecise, we can interpret them as stating that if $x\in\D$ then $\sqrt[n]{x}$ is either decimal or irrational, and that the same is true for $x\in\Q$. It is probable that, here, Cunn is simply restating the corresponding remark made by Wallis (see the beginning of section \ref{Histoire}). Unfortunately, he does not try to go beyond this statement and provide any clue for a proof in the spirit of his {\it repetends}. (See Theorem \ref{IrrationaliteUnitaires} for such a proof of a more general statement.)

\subsection{Edward Hatton: accounting and recreational mathematics}

Hatton's presentation of decimal arithmetics can be seen as a rationalization of some ideas on numbers sometimes rather naive. For example, in (Hatton, 1721, p. 131), Hatton defines decimal numbers as fractions whose denominator is a power of $10$, then goes for the decimal expansion of them, and eventually consider decimal numbers as possibly ``infinite'' (i.e. with infinitely many digits). One may interpret it as an implicit shift from the arithmetical definition of decimal numbers (fraction of denominator $10^n$) to a combinatorial one (a sequence of digits), the latter one being praised by Hatton ``because so like to an intire Number'' (Hatton, 1721, p. 13).

In (Hatton, 1721, p. 147), Hatton presents what is possibly the first historical example of an explicit computation leading to a number ending with infinitely many 9s, namely the product $0.1256\overline{4}\times0.00009=0.000011307\overline{9}$, as shown in Figure \ref{Hatton999}. (In Hatton's notation, the ``{\it r} 1'' means that the one last digit is to be repeated {\em ad infinitum}, as he explains it in (Hatton, 1721, p. 134).)

\begin{figure}[h!]
\centering
\includegraphics[height=3cm]{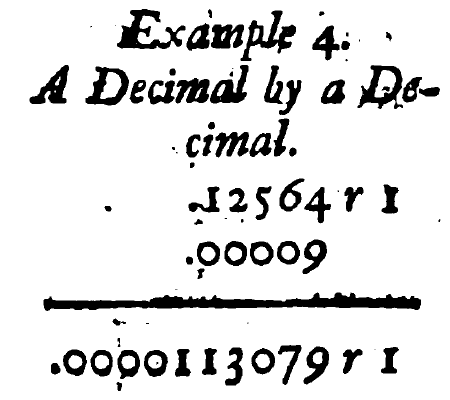}
\caption{Hatton's computation of $0.1256\overline{4}\times0.00009=0.000011307\overline{9}$.}
\label{Hatton999}
\end{figure}

As before, both factors are regarded as ``Decimals''. The justification for this calculation consists in reporting adequately the carry for the periodic part. To quote Hatton, who is quite clear here (Hatton, 1721, p. 147-148):

\begin{quote} {\it Note}, That in the fourth Example, because the 4 is repeated {\it ad infinitum}, therefore I say, 9 times 4 is 36, and 3 (which would be carry'd if you actually put down another 4) is 39~; put down 9, and carry 3. Now if you had put down and multiplied 100 Fours of those repeated, so many Nines would also be repeated in the Product~; but for brevity-sake I only put down one of each with an {\it r}.  \end{quote}

The main difference with Cunn is the fact that Hatton introduces the sign $r1$ to unify the infinite sequence of digits. 

 Hatton is not really interested in a general theory of calculation with periodic expressions. In his second work on the subject (Hatton, 1728), he sees the question merely as practical or recreational. He provides a few examples for which a quite naive algorithm is sufficient (as it is already the case in (Hatton, 1721) even if there are more examples and details in the latter). Still, Hatton's 1721 book proves that Hatton does understand how to calculate with periodic expansions.
 
 Nevertheless, one of his calculations makes it quite clear that identifying $0.\overline{9}$ and $1$ is beyond his scope (see Figure \ref{Hatton189}). 
 
\begin{figure}[h!]
\centering
\includegraphics[height=5cm]{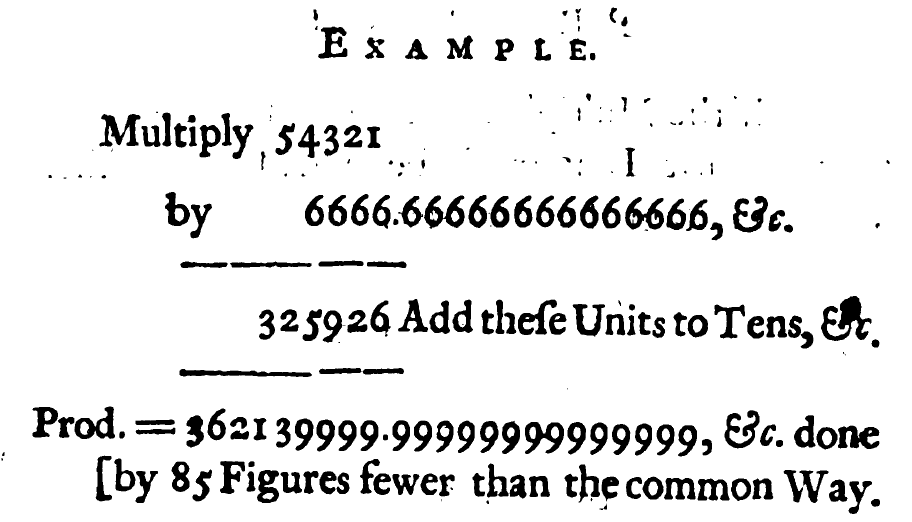}
\caption{Hatton's non-identification of $362139999.\overline{9}$ with $362140000$.}
\label{Hatton189}
\end{figure}

\subsection{Alexander Malcolm: $0.\overline{9}=1$ and beyond}\label{AMalcolm}

In 1730, Alexander Malcolm (1730, p. 472) provides what is possibly the first general statement about the existence of two decimal expansions for decimal numbers (Figure \ref{Malcolm}), as well as a rigorous proof of it.

\begin{figure}[h!]
\centering
\includegraphics[height=2.3cm]{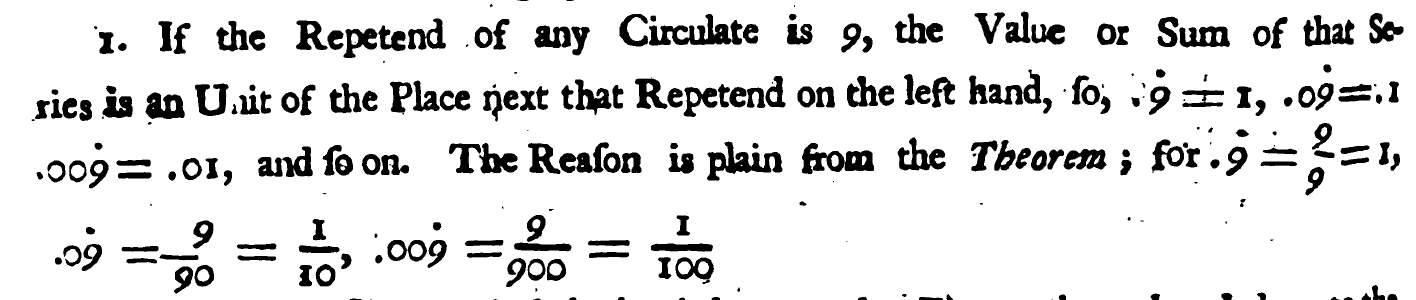}
\caption{Malcolm's assertion on decimal expansions of decimal numbers.}
\label{Malcolm}
\end{figure}

By the ``Repetend of any Circulates'', Malcolm means the periodic part of the decimal expansion of a number. Also, Malcolm writes $\dot{9}$ for our $\overline{9}$, hence the equality $.\dot{9}=1$ corresponds exactly to our $0.\overline{9}=1$. The theorem he refers to states the general correspondence between rational numbers and ultimately periodic expansions, based on the equality $0.\overline{M}=M/(10^{\ell}-1)$, where $\ell$ is the number of digits in $M$. Malcolm proves this theorem at length, by the use of the formula of the sum of the terms of a geometric sequence.

\subsection{John Marsh}\label{JMarsh}

Wherever all previous authors consider periodic decimal expansions rather as an aspect of decimal arithmetics among others, John Marsh is the first, in 1742, to write a book fully devoted to this single  notion (Marsh, 1742). (For an extensive presentation of his work, we refer to (Melville, 2018).)  Marsh is fully aware of the authors before him, but want to get rid of some mistakes he found in their works. He produces general algorithms for multiplication and division, and understands the problems arising by rapidly increasing length of periodic decimal parts in calculations (see our Proposition \ref{CasPartFauxFois}).

Marsh does understand very well the equality $0.999\ldots=1$, and states the same general result as Malcolm (see previous section), providing some more general examples like $19.\overline{9}=20$ and $399.\overline{9}=400$ (Marsh, 1742, p. 16). His justification of the equality is mainly topological, close to the argument given in (Dubinsky et al., 2005, p. 261-262).

\begin{figure}[h!]
\centering
\includegraphics[height=4cm]{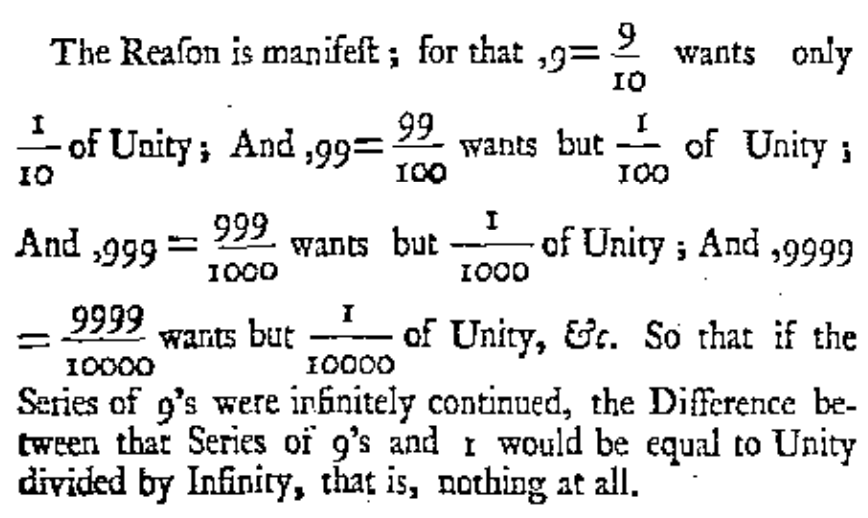}
\caption{Marsh's argument for $0.\overline{9}=1$}
\label{Marsh0}
\end{figure}

The equality $0.\overline{9}=1$ seems very natural to Marsh, as illustrates his first sentence of his proof (``The Reason is manifest''). Marsh is possibly the first author to show this equality in action at length, providing a lot of examples of its usefulness in subsequent calculations.  His first explicit case (Marsh, 1742, p. 36) is a calculation reproduced here in Figure \ref{Marsh1}, corresponding to the calculation $0.\overline{571428}+0.\overline{285714}+0.\overline{142857}=0.\overline{9}=1$. It appears among his first examples of computation with periodic expansions.

\begin{figure}[h!]
\centering
\includegraphics[height=3cm]{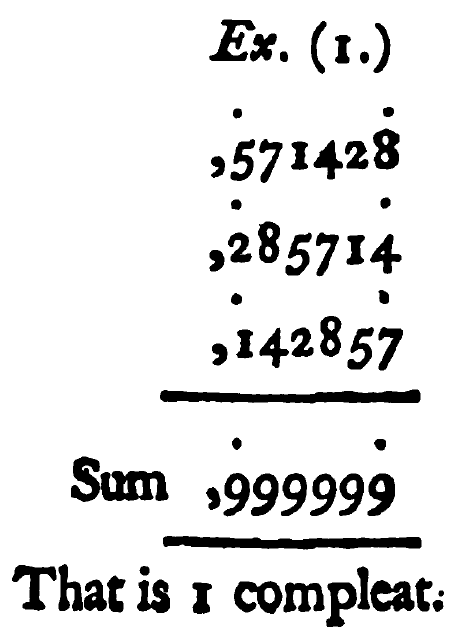}
\caption{A computation made by Marsh with periodic expansion, mentioning the identification of $0.\overline{9}$ with $1$.}
\label{Marsh1}
\end{figure}

Beforehand (Marsh, 1742, p. 32), we may find the same kind of identification reproduced in Figure \ref{Marsh2} but only in an indirect manner, in the calculation $0.9\overline{3}+0.7\overline{3}+0.2\overline{6}+0.0\overline{6}$.
\begin{figure}[h!]
\centering
\includegraphics[height=3cm]{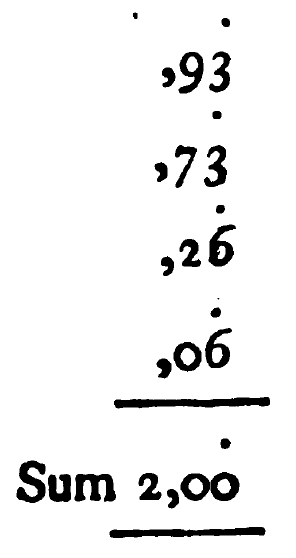}
\label{Marsh2}
\caption{Another calculation made by Marsh, with the implicit assumption that $0.\overline{9}=1$}
\end{figure}

The result is $2.0\overline{0}$ and not $1.9\overline{9}$ since the rule Marsh is following (given p. 30) is more or less equivalent to consider the ``circulate'' digits of each number as ninths parts, as Brown does (see Section \ref{GBrown}).

\subsection{An APOS interpretation}

The work of the previous authors (and some others) on decimal expansion of rational numbers can be interpretated in the framework of {\sc apos} theory. Here we focus on two aspects. The first one is the number obtained, with infinitely many digits. It may be understood as a process or object, here referred as $P_1$ and $O_1$. The second aspect is the understanding of the periodic part, which can also constitute a process or an object, $P_2$ and $O_2$. Besides the production of decimals by division ($P_1$ and $P_2$), we are interested more specifically in operations on objects $O_1$ and $O_2$ (which can also be interpretated as stages of {\sc apos} theory) as well as the equality $0.\overline{9}=1$ which allows to regard a fraction and its corresponding decimal expansion as equivalent, defining a rational number.

Wallis is at the stage of objects $O_1$ and $O_2$, furthermore asking for the length of the periodic parts. He is much interested in $O_2$, and not so much in $O_1$. He does not consider operations on periodic parts, and does not seem to remark the equality $0.\overline{9}=1$. He uses commas to circumscribe a periodic part, as in $0.803,571428,571428,57\text{\&c}$. Wallis has a more general notion of a number, mentioning the sexagesimal numeration system and, above all, real numbers (Wallis, chapter LXXXIX):

\begin{quote}But the concinnity which thus appears in the interminate Quotient of a Division, (the same numbers again returning in a continual Circulation ;) is not to be expected in like manner in the Extraction of Roots, (Square, Cubick, or of higher Powers.) For though the Surd Root may be continued by Approximation in Decimal parts, infinitely: Yet we have not therein the like recurrence of the numeral Figures in the same order, as in Division we had. As $\surd 2=1.41421356+$. Which yet hiders not but that this approximation may be safely admitted in practice; and if so supposed infinitely continued, must be supposed to equal the Root of that Surd number; as truly as 0.33333, \&c, infinitely, to equal $\frac{1}{3}$.
\end{quote}

The stage Totality is recognizable, with a specific + sign, meaning that the digits that come next are different from the first ones, providing the status of an object.

Brown seems to be at the stage $O_1$, operating on $O_1$ as if it were decimal numbers (by appending enough digits). We can also interpret what he does as a desencapsulation of $O_1$ allowing to operate (stage Action). This is facilitated by the fact that the numbers he deals with come from English monetary units, hence have periodic parts of length $1$, essentially $\overline{3}$ and $\overline{6}$ (deriving from $1/3$ and $1/12$). For example, he does not identify the periodic part of 908/19, only writing ``etc''. He seems to be at the $P_2$ process stage, since he does not operate directly on periodic parts, even if the circular powers identification (see section \ref{Representation1}) is recognizable (Brown, p. 11):

\begin{quote} Here you see the Decimal, for one penny is infinite, and yet you may limit it at any one of the reiterated Figures after Decimal thirds, or you may extend it as much further as you pleased.
\end{quote}

Cunn is at stage $O_1$, with an understanding of the equivalence between periodic decimal expansions and fractions, the use of integers of the form $99\cdots 99$ for the denominator, operations made on $O_1$ by algorithms showing the Process stage. He is also at stage $O_2$. He uses explicitely the word ``period'', and identifies $\overline{222222}$ with $\overline{2}$ (circular powers identification), with a specific notation for these, made of slashes delimiting the period (as in $8.59\!\!\!/35881\!\!\!/$ for $8.5\overline{935881}$). He even provides, for a multiplication, a period of length $24$. He gets the identity $0.\overline{9}=1$, refers to Wallis and makes use of his notation $+$ for the aperiodicity of square roots.

One of Hatton’s calculations makes it quite clear that identifying $0.\overline{9}$ and $1$ is beyond his scope (see Figure \ref{Hatton189}). Nevertheless, we can consider that he is at the stage $O_1$, since he succeedes in making computation with them. He is also at the stage $O_2$, with a specific notation. The Object is very clearly stated. However, he desencapsulates it into the Process when writing ``because the 4 is repeated'' to compute the product, then reencapsulates the Process into the Object with the notation $r6$ to indicates that six digits are to be repeated. He carries out some multiplications involving $1/3$, hence with periodic parts of length $1$. He seems quite close to Brown when he makes calculations with appending decimals rather than considering true periodic parts (apart from very simple cases like $3r+6r$).

Malcolm is at stages $O_1$ and $O_2$, with operations at the Process stage (with algorithms). He identifies clearly the equality $0.\overline{9}=1$, based on a geometric series (without details). The equivalence between fraction and periodic decimal expansion is explicit, with the periodic part as a numerator and $10^\ell-1$ as a denominator. Malcolm's notation for periodic part is made of a point over the initial and final digits of it. Also, Malcolm provides some examples of multiplication of periodic decimal expansions, but rather suggests to convert them into fractions (Malcolm, p. 483): ``it is much more tedious than the Multiplication of Finite Decimals, considering how easily the Finite Value of a Circulate is found ; and how easy it is to divide their Denominators''.

In his treatise, Marsh considers the full set of (positive) rational numbers and provide complete computation algorithms (referring to Wallis, Brown, Cunn and Malcolm). He is at stage $O_1$ and $O_2$ with his algorithms for standard operations (Process), but also writes a chapter on powers and roots. Multiplication reiterated could indicates that he is at the Object stage for operations. He also identifies the equality $0.\overline{9}=1$, with an explicit topological argument relying on a geometric series. He makes the equivalence between fractions and periodic expansions explicit, and he uses Malcolm's notations for periodic parts.

None of these authors set up a coherent and complete structure of the set of rational numbers from periodic decimal expansion, even if Cunn, Malcolm, and above them Marsh, were not far from it. To be more precise, consider the {\sc apos} notion of Schema, defined in (Arnon et al. p. 111) as ``a tool for understanding how knowledge is structured and its development through the learning process''. More specifically, we are interested in the schema for the set of rational numbers, in the three stages intra-$\Q$, inter-$\Q$ and trans-$\Q$. The same paper explains these stages as follows:

\begin{quote} In {\sc apos} theory, the Intra-stage of Schema development is characterized by a focus on individual Actions, Processes, and Objects in isolation from other cognitive items. At the Intra-stage, the student concentrates on a repeatable action or operation and may recognize some relationships or transformations among Actions on different components of the Schema. (p. 114)\end{quote}

\begin{quote} The Inter-stage is characterized by the construction of relationships and transformations among the Processes and Objects that make up the Schema. At this stage, an individual may begin to group items together and even call them by the same name. (p. 116)\end{quote}

\begin{quote} As a student reflects upon coordinations and relations developed in the Inter-stage, new structures arise. Through syntheses of those relations, the student becomes aware of the transformations involved in the Schema and constructs an underlying structure. This leads to development of the Schema at the Trans-stage. A critical aspect of the Trans-stage is development of coherence. Coherence is demonstrated by an individual’s ability to recognize the relationships that are included in the Schema and, when facing a problem situation, to determine whether the problem situation fits within the scope of the Schema. In some cases, the constructions involved in the mathematical definitions of a concept show coherence of the Schema; this means the individual is able to reflect on the explicit structure of the Schema and select from it the content that is suitable in solution of the problem. (p. 118)\end{quote}

Hence, we can specify the three stages of the Schema for rational numbers:

\begin{itemize}

\item Intra-$\Q$: periodic decimal expansions derive from fractions, with possibly the periodic part at a Process or Object stage, and also possibly Actions on these objects (operations). Wallis, Brown and Hatton are at this stage.

\item Inter-$\Q$: the fundamental relationship is the equality $0.\overline{9}=1$, which provides the link between the two equivalent representations of rational numbers. These ones can be properly defined as periodic decimal expansions, which are not subordinate to fractions anymore. The periodic parts are seen as objects, operations with algorithms can be at the Process stage. Notions of topology may arise to justify that $0.\overline{9}=1$. This stage is reached by Cunn and Malcolm.

\item Trans-$\Q$: the structure of periodic decimal expansions is coherent, with full equivalence with fractions, and operations as Objects and with their algebraic properties, defining the field $\Q$. With his treatise devoted to periodic decimal expansions and operations at the stage Object, Marsh is at this stage, even if not completely since he does not identify the structure of $\Q$ with decimal expansions. (The fact is that algebraic structures were not really considered for themselves at the time.)

\end{itemize}

It seems that such an analysis about periodic decimal expansions was never carried on to its end, since the reference Schema is mainly the one for $\R$ (as we can already see for Wallis). In the next section, the aim is to set up a mathematical framework that could constitute the basis for a Schema trans-$\Q$.

\section{Combinatorial definitions for sets isomorphic to $\Q$}\label{Combi}

Here and in the next sections, we are interested in more strictly mathematical aspects of circular words in base $b$. We wish to investigate what can be done with a purely combinatorial definition of the set $\Q$ without making use of fractions. Our aim is to define two set (hereafter named ${\cal Q}_{\text{WCP}}$ and ${\cal Q}_{\text{DC}}$) that will be eventually proved to correspond to $\Q$. Both definitions derive from the characterization of rational numbers given by Theorem \ref{Main}, namely: for any integer $b\geqslant 2$, a number is rational iff its $b$-expansion is ultimately periodic.

The definition of the two sets ${\cal Q}_{\text{WCP}}$ and ${\cal Q}_{\text{DC}}$ are quite similar, but are suited for different purposes. The first one remains close to the usual perception of $b$-expansion, the second one is better for theoretical reasoning. Ultimately, such representations will allow us to define the field $\Q$ (Sections \ref{Add} and \ref{Mult}), and to provide a new and quite simple proof that algebraic integers are either integers or irrational numbers (Section \ref{ThIrr}).

\subsection{Circular words}\label{CircW}

Let $b\geqslant 2$ be a fixed integer. A $b$-expansion of a number $x$ is a codage of $x$ by a sequence, called a {\em word}, of elements of the {\em alphabet} ${\cal A}:=\{0,1,\ldots,\beta\}$, where $\beta:=b-1$. A finite word $W$ is generically written $w_0w_1\cdots w_{\ell-1}$ (with $w_i\in{\cal A}$ for all $i$), where $\ell$ is the {\em length} of $W$, also written $|W|$. For $W'=w'_0\cdots w'_{\ell'}$, we define $WW'$ as the concatenation of $W$ and $W'$, that is: $WW'=w_0\cdots w_\ell w'_0\cdots w'_{\ell'}$. Defining $W^1:=W$, we also put, for any $n\geqslant 2$, $W^n:=WW^{n-1}$.

Occasionaly there will be some ambiguity with the notation for exponents, but the context will make things clear. For example, the expression $\beta^\ell$ will always stand for the concatenation of $\ell$ copies of the single-letter word $\beta$ (and never for the value $\beta$ to the $\ell$-th power), whereas $b^n$ will always denote the usual power of the natural number $b$.

The application ${\displaystyle N(w_0\ldots w_{\ell-1}):=\sum_{i<\ell}w_ib^{\ell-1-i}}$ defines a one-to-one correspondence between the set of finite words with $w_0\neq 0$ and the set $\N^*$. Such an application corresponds to the usual writing in base $b$ (in which the rightmost letter $w_{\ell-1}$ corresponds to units). To avoid cumbrous notations, we will frequently confuse $W$ and $N(W)$ in the sequel. Again, the context will make things clear.

We already encountered {\em infinite periodic words} in the previous sections, denoted by $\overline{W}$ (as in $0.\overline{9}$ or $0.\overline{873}$). A slightly distinct notion is the notion of {\em circular word of length $\ell$}, a word $\widetilde{W}$ whose letters are indexed by $\Z/\ell\Z$ instead of $\{0,1,\ldots, \ell-1\}$. Intuitively speaking, a circular word it is a finite word in which its final letter is followed by its initial one (indexed by $0$, so one may speak of {\it dotted} circular words for the sake of precision). The set of circular words of length $\ell$ is denoted by $\widetilde{{\cal A}^\ell}$, and the set ${\displaystyle\bigcup_{\ell\geqslant 1}\widetilde{{\cal A}^\ell}}$ of all circular words on the alphabet ${\cal A}$ is written $\widetilde{\cal A}$. The {\em shift} $\sigma$ on $\widetilde{{\cal A}}$ is the bijection such that $\sigma(\widetilde{w_0\ldots w_{\ell-1}})=\widetilde{w_1\ldots w_{\ell-1}w_0}$ for any circular word of length $\ell$.

Despite its natural appearance in decimal expansion of rational numbers, the study of circular words seems to be very recent (Rittaud \& Vivier, 2012b, 2011; Rittaud, to appear), apart from its intuitive utilization.\footnote{Wallis already coins the term ``circulation''; Marsh uses indifferently the terms ``circulant'' and ``repetend''; between Wallis and Marsh, William Jones (Jones, 1706, p. 104-105) talked of ``circulating figures'', Samuel Cunn (1714, p. 61)  of ``circulating numbers'', Alexander Malcolm (1730, p. 150) of ``circulating decimals''. Some years after Marsh, John Robertson (Robertson, 1769) speaks of ``circulating fractions''.} Here is a classical and interesting application:

\begin{theorem}[Fermat's little theorem]\label{Fermat} Let $p$ be a prime number. For any integer $b$, we have $b^{p}\equiv b\ (\bmod\ p)$.
\end{theorem}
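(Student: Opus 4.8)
The plan is to give a combinatorial proof using circular words, matching the paper's stated intention that circular words yield ``a combinatorial proof of Fermat's Little Theorem.'' First I would reduce to the case $0 \le b \le \beta = b-1$... wait, that is circular; rather, reduce to showing $b^p \equiv b \pmod p$ for $b$ a positive integer, and note it suffices to treat $b \ge 1$ since the statement is a congruence and the case $b=0$ is trivial. For fixed $b$, consider the alphabet $\mathcal{A}$ on $b$ letters and the set $\mathcal{A}^p$ of all words of length $p$; there are $b^p$ of them. The $b$ constant words $w_0 w_0 \cdots w_0$ account for the ``$b$'' on the right-hand side, so the goal becomes: the number of non-constant words of length $p$ is divisible by $p$.

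The key step is to act on the non-constant words of length $p$ by cyclic shift, i.e.\ pass to circular words $\widetilde{\mathcal{A}^p}$ and use the shift $\sigma$ defined in the paper. The orbit of a word $W$ under $\sigma$ has size dividing $p$ (the order of $\sigma$ acting on length-$p$ circular words divides $p$); since $p$ is prime, each orbit has size $1$ or $p$. An orbit has size $1$ exactly when $W$ is fixed by $\sigma$, which happens exactly when all letters are equal, i.e.\ $W$ is one of the $b$ constant words. Hence every orbit of a non-constant word has size exactly $p$, so the non-constant words of length $p$ are partitioned into orbits of size $p$, giving $p \mid (b^p - b)$. This is essentially the classical ``necklace'' argument, but here phrased directly in terms of the circular words $\widetilde{\mathcal{A}^p}$ and the shift $\sigma$ already introduced, so no new machinery is needed.

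The main obstacle — really the only point that needs care — is justifying that a word of length $p$ fixed by $\sigma$ must be constant, and more precisely that the shift-orbit of a non-constant word has size exactly $p$ rather than a proper divisor. The clean way is: if $\sigma^d(W) = W$ for some $d$ with $1 \le d < p$, then letting $e = \gcd(d,p)$ we get $\sigma^e(W) = W$; since $p$ is prime and $d < p$, necessarily $e = 1$... hmm, wait, $\gcd(d,p)=1$ when $0<d<p$, so $\sigma(W)=W$, forcing $w_i = w_{i+1}$ for all $i \in \Z/p\Z$, hence $W$ constant. I would state this as a short lemma (the stabilizer of $W$ in the cyclic group $\langle \sigma \rangle \cong \Z/p\Z$ is a subgroup, hence trivial or everything, and it is everything only for constant $W$), then conclude by the orbit counting identity $b^p - b = p \cdot (\text{number of shift-orbits of non-constant length-}p\text{ circular words})$.

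\begin{proof}
We may assume $b \geq 0$; the case $b = 0$ being trivial, assume $b \geq 1$ and work with the alphabet $\mathcal{A} = \{0, 1, \ldots, \beta\}$ of cardinality $b$. The set $\mathcal{A}^p$ of words of length $p$ has $b^p$ elements. Consider the set $\widetilde{\mathcal{A}^p}$ of circular words of length $p$ and the shift $\sigma$, which generates a cyclic group of order dividing $p$; since $p$ is prime, $\langle \sigma \rangle$ acting on $\widetilde{\mathcal{A}^p}$ has order $1$ or $p$, and it is clearly not the identity (e.g.\ $\sigma$ moves $\widetilde{0\cdots 01}$), so $\langle\sigma\rangle \cong \Z/p\Z$.

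The stabilizer of a circular word $W$ under $\langle \sigma \rangle$ is a subgroup of $\Z/p\Z$, hence either trivial or all of $\Z/p\Z$. If it is all of $\Z/p\Z$ then $\sigma(W) = W$, i.e.\ $w_i = w_{i+1}$ for every $i \in \Z/p\Z$, so $W$ is one of the $b$ constant circular words $\widetilde{c\cdots c}$ with $c \in \mathcal{A}$; conversely these are evidently $\sigma$-invariant. Therefore the $b^p - b$ non-constant circular words of length $p$ all have trivial stabilizer, so by the orbit--stabilizer theorem each lies in a $\langle\sigma\rangle$-orbit of size exactly $p$. These orbits partition the non-constant words, whence $p \mid b^p - b$, i.e.\ $b^p \equiv b \ (\bmod\ p)$.
\end{proof}
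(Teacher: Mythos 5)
Your proof is correct and follows essentially the same route as the paper: counting circular words of length $p$ by shift-orbits, which have size $p$ except for the $b$ constant words, giving $b^p\equiv b\ (\bmod\ p)$. The only difference is that you spell out the orbit--stabilizer justification that the paper leaves implicit, which is a reasonable addition.
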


\begin{proof} Since $p$ is prime, for any $\widetilde{W}\in\widetilde{{\cal A}^p}$ not of the form $\widetilde{w^p}$, we have that $\widetilde{W}$, $\sigma(\widetilde{W})$, \ldots, $\sigma^{p-1}(\widetilde{W})$ are different circular words. Hence, $\widetilde{{\cal A}^p}$ splits into subsets made of exactly $p$ elements (the equivalence classes under the equivalence relation $\widetilde{W}\sim\widetilde{W'}$ iff $\widetilde{W'}=\sigma^k(\widetilde{W})$ for some $k$, except for the words of the form $\widetilde{w^p}$), plus the subset $\{\widetilde{w^p},\ w\in{\cal A}\}$, which contains exactly $b$ elements. Therefore, the cardinality of $\widetilde{{\cal A}^p}$, equal to $b^p$, is also of the form $kp+b$, where $k$ is some integer. We thus have $b^p\equiv b\ (\bmod\ p)$.\end{proof}

Note that, apart from the primality of $p$, the proof relies on combinatorics, not on arithmetic. A way to generalize the theorem is to consider circular words with some combinatorial constraints. For example, let ${\cal A}:=\{0,1\}$, and consider the set of circular words of length $\ell$ in which the subword $11$ does not appear (note that $11$ appears in the word  $\widetilde{10^{\ell-2}1}$ because of the circular structure). It can be shown that its cardinality is given by the {\em Lucas sequence} $(L_\ell)_\ell$ defined by $L_1=1$, $L_2=3$ and $L_\ell=L_{\ell-1}+L_{\ell-2}$ (see Rittaud \& Vivier, 2012b). The same proof as before then gives the following variant of Fermat's Little Theorem: for any prime number $p$, we have $L_p\equiv 1\ (\bmod\ p)$. This result can, of course, be generalized to other combinatorial constraints\footnote{A lot of what is presented here can probably be generalized to $b$-expansions for algebraic values of $b$. (The previous example corresponds to the case $b=(1+\sqrt{5})/2$.) Nevertheless, the extension of the theory for these values is in no way trivial and still a work in progress. For example,  Theorem \ref{StructGpe} does not hold for $b=(1+\sqrt{5})/2$.}.

\subsection{The set ${\cal Q}_{\text{\rm WCP}}$ of word-circular-point representation of rational numbers}\label{Representation1}

This set is in some sense the most natural one, and from a teaching perspective the simplest one. As presented in Section \ref{Histoire}, it is, at least in an implicit way, the representation chosen by English authors of the {\sc xviii}\textsuperscript{th} century involved in the study of periodic decimal expansion for practical arithmetics.

Consider the rational number whose decimal expansion is $24.837\overline{56}$. We will say here that its {\it word-circular-point representation} (WCP) is the triple $(24837,\widetilde{56},-3)$. In this triple, the part $24837$ corresponds to the aperiodic part of the expansion, $\widetilde{56}$ corresponds to the periodic part, and $-3$ localizes the position of the decimal point (by counting the number of digits between it and the beginning of the periodic part, counted negatively if the decimal point lies in the aperiodic part and positively otherwise). Also, to get negative rational numbers, we would need to symmetrize the set, which can be done by defining quadruples $(s,W,\widetilde{P},c)$ with $s\in\{+,-\}$. This would be quite cumbrous, so we will not consider it in the following, but in Section \ref{AbelianGroupQ} where it is needed to get the group structure.

It is easy to prove that the rational number that corresponds to the given triple $(W,\widetilde{P},c)$ is $b^c\big(W+P/\beta^{|P|}\big)=b^c(W+0.PPP\ldots)$. To make this application bijective, several identifications are to be made to take into account that several expressions of the form $(W,\widetilde{P},c)$ correspond to the same rational number. These identifications are:

\begin{enumerate}

\item the leading zeroes identification: $(W,\widetilde{P},c)\equiv (0W,\widetilde{P},c)$;

\item the circular powers identification: $(W,\widetilde{P},c)\equiv(W,\widetilde{P^k},c)$ for any $k\in\N^*$;

\item the ``$0.999\ldots=1$'' identification: $(W,\widetilde{\beta},c)\equiv(W+1,\widetilde{0},c)$;

\item the shift identification: $(W,\widetilde{P},c)\equiv(Wp_0,\sigma(\widetilde{P}), c-1)$, where $p_0$ is the initial letter of $\widetilde{P}$;

\end{enumerate}

\begin{definition} The set ${\cal Q}_{\text{\rm WCP}}$ is the set of triples $(W,\widetilde{P},c)$ quotiented by these four identifications. 
\end{definition}

Most of the English authors of the {\sc xviii}\textsuperscript{th} state the circular powers and the shift identifications, mainly for the purpose of addition (see Section \ref{AbSt}).

The three first identifications are somewhat inescapable: the first one provides a rule for the aperiodic part, the second one a rule for the periodic part, and the third one articulates the link between the two, link without which the structure would reduces to a direct product. There is still a gap between the two firsts, easily  accepted  at an elementary level, and the third one, much more difficult to accept (see  Section \ref{0999Additive} for a way to make the third one ``natural''; note also that we will make use of it only when the addition of rational numbers is defined). As for the shift identification, it does not rely on a fundamental structure, it is more a technical identification, which is less satisfactory in a theoretical meaning. This inconvenience will be overcome by our second construction (section \ref{Representation2}), in which there will be no need for such a shift identification.

Even if we will not elaborate on this later, observe that the integer $c$ is more important for addition than for multiplication in ${\cal Q}_{\text{\rm WCP}}$. Indeed, addition of numbers given on a $b$-expansion form requires a clear positioning of the digits, whereas multiplication does not. As can be checked, Marsh's algorithm for multiplication (Section \ref{JMarsh}) does not fundamentally need a value $c$. Algebraic properties of the set made of pairs $(W,\widetilde{P})$ has some historical roots since, as shown by historians like Fran\c{c}ois Thureau-Dangin (1930, p. 117) and Christine Proust (2007, p. 249-251), such a numeration system without position was the underlying mathematical structure in use in the Babylonian sexagesimal numeration system, four millenia ago.

\subsection{The set ${\cal Q}_{\text{\rm DC}}$ of  decimal-circular representation of rational numbers}\label{Representation2}

This second representation is interesting in that it can be seen as a more natural extension of the set $\D_b$ of $b$-decimal numbers ({\it i.e.} the set of all rational numbers which can be written on the form $\delta=u\cdot b^e$ with $u\in\Z$ and $e\in\Z$).

First, we  define a ring ${\cal D}_b$ isomorphic to the ring $\D_b$ in a combinatorial way similar to the previous ones. An element $\delta$ of ${\cal D}_b$ is a finite word $W$ on the alphabet ${\cal A}$ together with an integer $c$ with $0\leqslant c\leqslant |W|$ ($c$ corresponds to the place of the ``decimal'' point, with $c=0$ for $c$ after the rightmost letter of $W$) and a sign $s\in\{+,-\}$. The necessary and sufficient identifications that make ${\cal D}_b$ isomorphic to $\D_d$ are therefore: $(s,W,c)=(s,0W,c)=(s,W0,c+1)$.

Up to some more identifications, the set ${\cal Q}_{\text{\rm DC}}$ is then defined as ${\cal D}_b\times\widetilde{{\cal A}}$. To understand the way it is done, consider again the rational number $r=24.837\overline{56}$. Its representation by a pair $(\delta,\widetilde{P})$ consists, in some sense, in forcing its periodic part to start right after the decimal point by writing the number as the sum $24.181+0.\overline{65}$, so the number is represented by the pair $(24.181, \widetilde{65})$. This is the {\em decimal-circular} representation.

The $(\delta,\widetilde{P})$ corresponds to the rational number $\delta+P/\beta^{|P|}=\delta+0.PPP\ldots$. As for the identifications we need to make ultimately ${\cal Q}_{\text{\rm DC}}$ isomorphic to $\Q$ as fields, they appear to be more natural than those for ${\cal Q}_{\text{\rm WCP}}$. The leading zeroes identification now derives from the preliminary construction of ${\cal D}_b$ and, most importantly, the shift identification is not required anymore. Moreover, there is no need for an {\em ex post} symmetrization since ${\cal D}_b$ is already a group. The remaining identifications are:

\begin{itemize}

\item the circular powers identification: $(\delta,\widetilde{P})=(\delta,\widetilde{P^k})$ for any $k\in\N^*$;

\item the ``$0.999\ldots=1$'' identification: $(\delta,\widetilde{\beta})\equiv(\delta+1,\widetilde{0})$, where $\delta=1$ is defined in a standard way.

\end{itemize}

\begin{definition} The set ${\cal Q}_{\text{\rm DC}}$ is the set of pairs $(\delta,\widetilde{P})$ quotiented by these identifications.
\end{definition}

In a teaching perspective, the main inconvenient of ${\cal Q}_{\text{\rm DC}}$ is that the decimal number $\delta$ may strongly differ from the usual aperiodic part of the represented number. For example, the DC representation of $2.14444\ldots$ is $(1.7,\widetilde{4})$, hence its integer part is not  equal to the integral part of its $\delta$. It can even occur in some cases that $\delta$ is negative whereas $r$ is positive, as the example of $0.47777\ldots=(-0.3,\widetilde{7})$ shows.

Apart from this inconvenience and the cumbreness it produces for ordering considerations (see Section \ref{OrderedSet}), ${\cal Q}_{\text{\rm DC}}$ appears to be considerably more tractable than ${\cal Q}_{\text{\rm WCP}}$ in most aspects.

\subsection{Order on ${\cal Q}_{\text{\rm WCP}}$ and ${\cal Q}_{\text{\rm DC}}$}\label{OrderedSet}

Order on ${\cal Q}_{\text{\rm WCP}}$ and ${\cal Q}_{\text{\rm DC}}$ are slightly difficult to be defined on a proper way, since the identifications forced by $\equiv$ makes it dfficult to provide a definition simply from triples $(W,\widetilde{P},c)$ and $(\delta,\widetilde{P})$.

First, we can define the lexicographical order $\leqslant$ on the set of finite words on the alphabet ${\cal A}$ in the following way: for $W=w_\ell\cdots w_1$ and $W'=w'_{\ell'}\cdots w'_1$ with $\ell<\ell'$, replace first $W$ by $0^{\ell'-\ell}W$ to ensure both words have the same length, then let $k$ be the biggest index for which $w_k\neq w'_k$ (if any; otherwise $W=W'$). Then, $W$ and $W'$ are in the same order as $w_k$ and $w'_k$ are in ${\cal A}$.

In $\widetilde{\cal A}$, we can define a lexicographical order as well: to compare two circular words $\widetilde{P}$ and $\widetilde{P'}$, we use the identifications ${\widetilde{P^k}}=\widetilde{P}$ and ${\widetilde{P'^{k'}}}=\widetilde{P'}$ to get two circular words of the same length, then use the lexicographical order to decide which one is bigger than the other.

In ${\cal Q}_{\rm WCP}$, to compare $(W, \widetilde{P}, c)$ and $(W', \widetilde{P'},c')$, we first use the circular powers and shift identifications to transform $c'$ into $c$ and to have $|\widetilde{P}|=|\widetilde{P'}|$, then the leading zeroes identification to get $|W|=|W'|$. Since the sets ${\cal A}^\ell$ and $\widetilde{{\cal A}^\ell}$ are ordered by the lexicographical order (both denoted by $<$), a first natural order on ${\cal Q}_{\rm WCP}$ is the {\em semiotic} one, here denoted by $\prec$ (and $\preccurlyeq$):
\[(W,\widetilde{P},c)\prec(W',\widetilde{P'},c)\Longleftrightarrow\left\{\begin{array}{l}
W<W'\ \mbox{\em or}\\
W=W'\ \mbox{and}\ \widetilde{P}< \widetilde{P'},\end{array}\right.\]

\noindent the binary relation $\preccurlyeq$ being defined in the same way, only replacing $\widetilde{P}< \widetilde{P'}$ by $\widetilde{P}\leqslant \widetilde{P'}$.

Such a semiotic order suggests, as scholars and students often believe, that $0.\overline{9}< 1$ (since $(0,\widetilde{9},0)\prec (1,\widetilde{0},0)$). To take into account the ``$0.999\ldots=1$'' identification, the order $\leqslant$ we wish to define is the following one:
\[(W,\widetilde{P},c)\leqslant(W',\widetilde{P'},c)\Longleftrightarrow\begin{cases}(W,\widetilde{P},c)\preccurlyeq(W',\widetilde{P'},c)&\mbox{\em or}\\ W'=W+1,\ \widetilde{P}=\widetilde{0}\ \mbox{\rm and }\widetilde{P'}=\widetilde{9}&\mbox{\em or}\\  W=W'+1,\ \widetilde{P}=\widetilde{9}\ \mbox{\rm and }\widetilde{P'}=\widetilde{0}. \end{cases}\]

Now, let us compare the DC representations of $x=(\delta,\widetilde{P})$ and $x'=(\delta',\widetilde{P'})$. By the circular powers identification, we may assume that $|\widetilde{P}|=|\widetilde{P'}|=\ell$. By the equalities $x=\delta+P/\beta^\ell$ and $x'=\delta'+P'/\beta^\ell$, we easily get that $x<x'$ iff $b^\ell(\delta-\delta')<\delta-\delta'+P'-P$ (where $P$ and $P'$ are to be understood as integers). This way to present the inequality allows to minimize the non-combinatorial calculations to be made too compare $x$ and $x'$ (recalling that multiplying by $b^\ell$ corresponds to a shift).

\section{The abelian groups $({\cal Q}_{\text{\rm WCP}},+)$ and $({\cal Q}_{\text{\rm DC}},+)$}\label{Add}

\subsection{Abelian structure on circular words}\label{AbSt}

Circular words of length $\ell$ can be added in the same way as for usual $b$-expansion of integers, except when, as in the right example in Figure \ref{ExSum}, the sum of the leftmost digits ($7+5$) produces a carry, which has to be put on the rightmost place (thus changing the $6$ into a $7$).

\begin{figure}[h!]
\centering
\includegraphics[height=3cm]{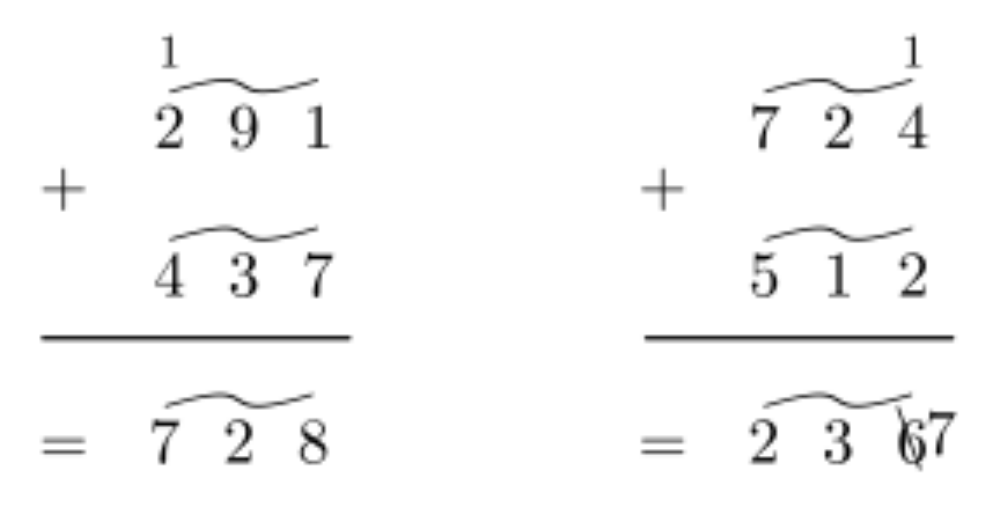}
\caption{Examples of summations of circular words.}
\label{ExSum}
\end{figure}

The addition in $\widetilde{{\cal A}^\ell}$ is associative, commutative and admits $\widetilde{0^\ell}$ as a neutral element. Unfortunately, this addition does not make $\widetilde{{\cal A}^\ell}$  a group, since none of its elements admits an inverse element (apart for $\widetilde{0^\ell}$ itself). To get an abelian group, the following observation can be made: for any $\widetilde{W}\in\widetilde{{\cal A}^\ell}\setminus\left\{\widetilde{0^\ell}\right\}$, we have $\widetilde{W}+\widetilde{\beta^\ell}=\widetilde{W}$. In other words, $\widetilde{\beta^\ell}$ is an ``almost neutral element'' of $(\widetilde{{\cal A}^\ell},+)$ (see also (Rittaud \& Vivier, 2012a). If we can make it a true neutral element, then $\widetilde{{\cal A}^\ell}$ becomes a group: indeed, the opposite of any $\widetilde{W}$ is the circular word in which each letter $w$ of $\widetilde{W}$ is replaced by $\beta-w$.

The idea, behind which lies for a part the equality $0.999\ldots=1$, is therefore to identify $\widetilde{0^\ell}$ and $\widetilde{\beta^\ell}$, thus defining a new set, ${\cal G}_\ell$, quotient of $\widetilde{{\cal A}^\ell}$ under the single equivalence $\widetilde{0^\ell}=\widetilde{\beta^\ell}$. On this new set the addition is well-defined (since we have $\widetilde{0^\ell}+\widetilde{\beta^\ell}=\widetilde{\beta^\ell}=\widetilde{0^\ell}$) and makes ${\cal G}_\ell$ an abelian group. Since ${\cal G}_\ell$ is also monogenetic (a generator is $\widetilde{0^{\ell-1}1}$) and contains $b^\ell-1$ elements, we have:

\begin{theorem}\label{StructGpe} The abelian group ${\cal G}_\ell$ is isomorphic to $\Z/(b^\ell-1)\Z$.
\end{theorem}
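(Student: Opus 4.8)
The plan is to show that $\mathcal{G}_\ell$ is cyclic of order $b^\ell - 1$ and then invoke the standard classification of finite cyclic groups. Everything reduces to two quantitative claims already hinted at in the paragraph preceding the statement: that $\mathcal{G}_\ell$ has exactly $b^\ell - 1$ elements, and that it is generated by a single element, which I will take to be $g := \widetilde{0^{\ell-1}1}$.

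First I would count. The set $\widetilde{\mathcal{A}^\ell}$ of all circular words of length $\ell$ is in bijection with $\mathcal{A}^\ell$ as a bare set (a dotted circular word is a genuine function $\mathbb{Z}/\ell\mathbb{Z}\to\mathcal{A}$), so $|\widetilde{\mathcal{A}^\ell}| = b^\ell$. The group $\mathcal{G}_\ell$ is the quotient under the single identification $\widetilde{0^\ell} = \widetilde{\beta^\ell}$, which collapses exactly two distinct elements into one and leaves all others untouched; hence $|\mathcal{G}_\ell| = b^\ell - 1$. I would state explicitly why $\widetilde{0^\ell}\neq\widetilde{\beta^\ell}$ in $\widetilde{\mathcal{A}^\ell}$ (they are different constant words since $\beta = b-1 \geqslant 1$), so that the count really drops by one.

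Next I would verify that the addition descends to $\mathcal{G}_\ell$ and makes it an abelian group, then that $g$ generates it. Associativity, commutativity, and the role of $\widetilde{0^\ell}$ as neutral element are inherited from the ``schoolbook'' addition on circular words with end-around carry; well-definedness of addition on the quotient follows from $\widetilde{W} + \widetilde{\beta^\ell} = \widetilde{W}$ for all $\widetilde{W}\neq\widetilde{0^\ell}$ (the end-around carry propagates all the way around and returns the word unchanged) together with $\widetilde{0^\ell}+\widetilde{\beta^\ell}=\widetilde{\beta^\ell}=\widetilde{0^\ell}$, so the class of $\widetilde{\beta^\ell}$ is a true neutral element; inverses exist because the complement word $\widetilde{w_0'\cdots w_{\ell-1}'}$ with $w_i' := \beta - w_i$ satisfies $\widetilde{W} + \widetilde{W'} = \widetilde{\beta^\ell} = \widetilde{0^\ell}$ in $\mathcal{G}_\ell$. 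For the cyclic claim, I would identify the $n$-fold sum $g + g + \cdots + g$ ($n$ times) with the circular word representing the base-$b$ integer $n$, i.e. with $N^{-1}(n)$ padded to length $\ell$, for $n = 1, \dots, b^\ell - 1$; these are $b^\ell - 1$ distinct classes (distinct circular words, none equal to $\widetilde{0^\ell}$, and none equal to $\widetilde{\beta^\ell}$ for $n < b^\ell - 1$), so $g$ has order $b^\ell - 1$ and $\mathcal{G}_\ell = \langle g\rangle$. A finite cyclic group of order $m$ is isomorphic to $\mathbb{Z}/m\mathbb{Z}$, which gives the theorem.

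The main obstacle is the bookkeeping around the end-around carry: one must check carefully that adding circular words is genuinely well-defined and associative despite the carry being fed back to position $0$, and in particular that $n$-fold addition of $g$ really tracks ordinary base-$b$ addition of the integer $n$ modulo the single relation $\widetilde{0^\ell}=\widetilde{\beta^\ell}$ — equivalently, modulo $b^\ell - 1$. Making this precise amounts to showing the map $\mathbb{Z}/(b^\ell-1)\mathbb{Z} \to \mathcal{G}_\ell$, $\,n \mapsto$ (class of the length-$\ell$ circular word of $n \bmod (b^\ell-1)$, with $0$ sent to $\widetilde{0^\ell}$), is a well-defined group homomorphism; surjectivity is the counting argument and injectivity is the order computation for $g$. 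Once the carry arithmetic is pinned down, the rest is routine.
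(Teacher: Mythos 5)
Your proposal is correct and follows exactly the route the paper takes, namely the sentence immediately preceding the theorem: ${\cal G}_\ell$ has $b^\ell-1$ elements and is monogenetic with generator $\widetilde{0^{\ell-1}1}$, hence is isomorphic to $\Z/(b^\ell-1)\Z$. You merely supply the bookkeeping (the count of dotted circular words, well-definedness of the end-around-carry addition on the quotient, and the identification of $n$-fold sums of the generator with base-$b$ counting) that the paper leaves implicit.
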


Note that, in ${\cal G}_\ell$, the shift corresponds to the multiplication by $b$. As a consequence, any subgroup ${\cal H}$ is shift-invariant (that is: $\sigma({\cal H})={\cal H}$).\\

Now, put ${\displaystyle{\cal G}=\bigcup_{\ell\geqslant 1}{\cal G}_\ell}$, and let ${\cal G}^*$ be the quotient set $\widetilde{{\cal G}}/\equiv$ defined by the equivalence relation that identifies a circular word with all its nontrivial powers, that is:
\[\widetilde{W}\equiv\widetilde{W'}\Longleftrightarrow \widetilde{W^m}=\widetilde{W'^n}\mbox{ for some positive $m$ and $n$.}\]

Thus, an addition in ${\cal G}^*$ can be easily derived from the addition in ${\cal G}_\ell$. For example, we have  $\widetilde{54}+\widetilde{627}=\widetilde{545454}+\widetilde{627627}=\widetilde{173082}$, and it is easily proved that such a definition is consistent.

\begin{theorem} The set ${\cal G}^*$ equipped with this addition is a abelian group. Any finite subgroup of it is monogenetic. If $p$ is a prime number, then the set ${\cal P}_p$ made of all circular words of order $p$ (plus the neutral element) is a finite subgroup isomorphic to $\Z/p\Z$ iff $b$ and $p$ are mutually primes.
\end{theorem}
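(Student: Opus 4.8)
The plan is to verify the three assertions in order, reducing each to facts already established. First, that $({\cal G}^*, +)$ is an abelian group: associativity and commutativity are inherited from each ${\cal G}_\ell$ (Theorem \ref{StructGpe}), since any finite collection of elements of ${\cal G}^*$ can be represented, via the circular powers identification, by circular words of a common length $\ell$ (take $\ell$ a common multiple of the relevant orders), and the addition was defined precisely to be compatible with this. The neutral element is the class of $\widetilde{0}$, and the opposite of $\widetilde{W}$ is obtained letterwise by $w \mapsto \beta - w$, exactly as in ${\cal G}_\ell$. The only thing needing care is that all of this is well-defined on the quotient $\widetilde{{\cal G}}/\!\equiv$, i.e. independent of the chosen common length; this is the ``easily proved'' consistency remark already made in the text, and it follows because passing from length $\ell$ to length $k\ell$ is the ring embedding $\Z/(b^\ell-1)\Z \hookrightarrow \Z/(b^{k\ell}-1)\Z$ (legitimate because $b^\ell - 1 \mid b^{k\ell}-1$ by Theorem \ref{gcdProperty999}$'$), which commutes with addition.

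Second, that every finite subgroup of ${\cal G}^*$ is monogenetic (cyclic). Here I would argue that any finite subgroup ${\cal H}$ must in fact lie inside a single ${\cal G}_\ell$: each element of ${\cal H}$ has some length, and letting $\ell = \lcm$ of the lengths of a generating set of ${\cal H}$ (finite, since ${\cal H}$ is finite), every element of ${\cal H}$ can be written as a circular word of length $\ell$, so ${\cal H}$ embeds as a subgroup of ${\cal G}_\ell \cong \Z/(b^\ell-1)\Z$. Subgroups of cyclic groups are cyclic, so ${\cal H}$ is monogenetic.

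Third, the claim about ${\cal P}_p$ for $p$ prime: that the set of circular words of order exactly $p$ (together with the neutral element) is a subgroup isomorphic to $\Z/p\Z$ if and only if $p$ and $b$ are mutually prime. For the ``if'' direction: when $\gcd(b,p)=1$, Corollary \ref{Yaunell} gives an $\ell$ with $p \mid b^\ell - 1$; take the smallest such $\ell$ (this is the multiplicative order of $b$ mod $p$). Then $\Z/(b^\ell-1)\Z \cong {\cal G}_\ell$ contains a unique subgroup of order $p$, namely the image of $(b^\ell-1)/p \cdot \Z/(b^\ell-1)\Z$; I would identify ${\cal P}_p$ with this subgroup by checking that its nonzero elements are exactly the circular words whose order in ${\cal G}^*$ is $p$ — the order being $p$ forces, via the minimality of $\ell$, that such words are not powers of shorter ones and that $p$ annihilates them. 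The ``only if'' direction is the one I expect to be the main obstacle, and I would prove its contrapositive: if $p \mid b$, then no element of ${\cal G}^*$ has order $p$ at all. The heuristic is that in $\Z/(b^\ell-1)\Z$ the quantity $b^\ell - 1$ is always coprime to $b$ (any common prime divisor would divide $1$), hence coprime to $p$; so $\Z/(b^\ell-1)\Z$ has no element of order $p$ for any $\ell$, and since every element of ${\cal G}^*$ of finite order lives in some ${\cal G}_\ell$, no element of ${\cal G}^*$ has order $p$ — making ${\cal P}_p$ reduce to $\{\widetilde{0}\}$, which is not isomorphic to $\Z/p\Z$. The delicate point to get right in writing this up is the precise meaning of ``order $p$'' for a circular word (order in the group ${\cal G}^*$, not the combinatorial period/length), and making sure the minimal-length representative argument correctly matches the combinatorial order with the group-theoretic order; once that dictionary is pinned down, each direction is short.
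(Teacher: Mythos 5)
Your proposal is correct and follows essentially the same route as the paper: reduce everything to a single ${\cal G}_\ell\cong\Z/(b^\ell-1)\Z$ via the circular powers identification, and settle the ${\cal P}_p$ claim through the coprimality of $b^\ell-1$ with $b$ (the paper's ``algebraic'' presentation does exactly this, obtaining $\ell$ from Fermat's little theorem rather than Corollary~\ref{Yaunell}). The paper additionally records a second, equivalent presentation of that last step using purely periodic expansions of $1/p$ and Proposition~\ref{UnSurv}, which your argument does not need.
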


\begin{proof} The first part of the theorem is trivial. Consider a finite subgroup of ${\cal G}^*$, say ${\cal H}$. By identification of circular words with their powers, it is possible to find an $\ell>0$ such that any element of ${\cal H}$ has a representative in $\widetilde{{\cal G}_\ell}$. Hence, from Theorem \ref{StructGpe} we deduce that ${\cal H}$ is monogenetic.

Now, for any $\ell>0$, consider the set ${\cal L}_\ell$ of all the elements of ${\cal G}^*$ of order $p$  which admits a representative of length at most $\ell$. As before, there exists $\ell'>0$ such that all the elements of ${\cal L}_\ell$ has a representative in ${\cal G_{\ell'}}$. Hence, completing the set with the neutral element of ${\cal G_{\ell'}}$, It is easily proved that we get a subgroup of ${\cal G_{\ell'}}$. Again by Theorem \ref{StructGpe}, ${\cal L}_\ell$ is therefore either reduced to the neutral element or equal to $\Z/p\Z$.

The last remaining thing to do is to prove that, for some big enough $\ell$, the set ${\cal L}_\ell$ is nontrivial iff $b$ and $p$ are mutually primes. Two different presentations of the proof can be given: a purely algebraical one and a presentation with the help of common fractions. Here we give both of them, the first one being more in line with the genral way we construct our sets, the second being an unexpected application of Proposition \ref{UnSurv}. 

Let us first provide the algebraical proof. First, assume $b$ and $p$ without any common divisor. By Fermat's little theorem, there exists $\ell>0$ and $v\in\N$ such that $b^\ell-1=vp$, hence $v$ is of order $p$ in $\Z/(b^\ell-1)\Z$ which is isomorphic to ${\cal G}_\ell$ by Theorem \ref{StructGpe}, so we are done. Now assume $b=kp$ and that there exists $g\in{\cal G}_\ell$ of order $p$. By the group isomorphism, we can therefore find $v\in\Z$ such that $vp\equiv  0 \bmod (b^\ell-1)$, so we can find an integer $c$ such that $vp=c(b^\ell-1)$. Since $p$ and $b^\ell-1$ are mutually prime and $p$ divides $c(b^\ell-1)$, $p$ divides $c$, so $c=pc'$ and $v=c'(b^\ell-1)$, so $g$ cannot be of order $p$.

 Now for the second presentation of the proof. Again, assume first that $b$ and $p$ have no common divisor.
Then, the $b$-expansion of $1/p$ is purely periodic by Proposition \ref{UnSurv}, and the circular word associated to its period provides an element of ${\cal L}_\ell$ (for big enough $\ell$). Conversely, assume that ${\cal L}_\ell$ has a nontrivial element $\widetilde{W}$, and consider the number $x=0.\overline{W}$. We then have that $n:=px\in\N^*$, so $n/p=x$ has a purely periodic $b$-expansion, hence $p$ and $b$ are mutually primes by Proposition \ref{UnSurv}.\end{proof}

\subsection{The commutative ring $\D_b$ and its action on circular words}\label{RingD}

The standard action of $\Z$ on $\widetilde{{\cal A}}$ defined by the addition with ``circular carry'' extends naturally to an action of the ring $\D_b$, which therefore makes $\widetilde{{\cal A}}$ a module over $\D_b$, hence also on ${\cal D}_b$ (as defined in Section \ref{Representation2}). Such an extension goes as follows: let $\delta=u\cdot b^e\in\D_b$, and let $(s,W,c)$ be its counterpart in ${\cal D}_b$. Let $\widetilde{P}\in {\cal G}^*$. . Then, $\delta\widetilde{P}$ is defined as $\sigma^{c}(s\cdot W\widetilde{P})$ (or, equivalently, as $\sigma^e(u\widetilde{P})$) where $\sigma$ is the shift operator on circular words and $W$ stands for its corresponding integer. Observe that, in this expression, $\sigma^c$, $s$ and $W$ commute (as well as $u$ and $\sigma^e$).

\subsection{Abelian structure on $({\cal Q}_{\text{\rm WCP}},+)$ and $({\cal Q}_{\text{\rm DC}},+)$}\label{AbelianGroupQ}

The simplicity of the addition in ${\cal G}_\ell$ cannot remain unchanged when considering ultimately periodic $b$-expansion of rational numbers, since we have to deal with a possible carry. Therefore, we need some complement to get the additive structure from the additive structure of circular words. Defining binary operations is a task simpler in ${\cal Q}_{\text{\rm DC}}$ than in ${\cal Q}_{\text{\rm WCP}}$. Nevertheless, because of the distinct interests of these representations, we will do it for both of them.

For any word $W$ and any $\ell>0$, define $q_\ell(W)$ as the quotient of the Euclidean division of $W$ by $\beta^\ell$. Most of the time we will simply write $q(W)$ instead, under the following type of assumption: let $\widetilde{P_1}$ and $\widetilde{P_2}$ be two circular words, of lengths $\ell_1$ and $\ell_2$. Then, $q(P_1+P_2)$ stands for $q_\ell(Q_1+Q_2)$, where $\ell$ is a common multiple of $\ell_1$ and $\ell_2$ and $\widetilde{Q_i}=\widetilde{P_i^{\ell/\ell_i}}$.

With such definition (which in fact corresponds to the definition of a carry), we can define addition in  ${\cal Q}_{\text{\rm WCP}}$ and in  ${\cal Q}_{\text{\rm DC}}$. As announced, it is immediate in the latter one:

\[\Big(\delta,\widetilde{P}\Big)+\Big(\delta',\widetilde{P'}\Big):=\Big(\delta+\delta'+q(P+P'),\widetilde{P}+\widetilde{P'}\Big).\]

As regards  ${\cal Q}_{\text{\rm WCP}}$, we have to suppose that the decimal points are at the same place to add conveniently the two numbers:
\[\Big(W,\widetilde{P}, c\Big)+\Big(W',\widetilde{P'},c\Big)=\Big(W+W'+q(P+P'),\widetilde{P}+\widetilde{P'},c\Big),\]
\noindent and to add two numbers with different values for $c$ we need to make use of the shift identification. Such a theoretical cumberness is a quite strong argument in favour of the use of  ${\cal Q}_{\text{\rm DC}}$ instead, even if the shift identification often remains an easy task in practice.

In the case ${\cal Q}_{\rm WCP}$, as is briefly indicated in Section \ref{Representation1}, elements are in fact quadruples $(s,W,\widetilde{P},c)$ with $s\in\{+,-\}$. The general definition of the addition has to be given also for two quadruples of different signs, say $(+,W,\widetilde{P},c)+(-,W', \widetilde{P'},c)$. By the ``0.999\ldots=1'' identification, we can assume that none of the quadruples contains the circular word $\widetilde{9}$, hence it makes sense to compare $\widetilde{P}$ and $\widetilde{P'}$ with the lexicographical order. Also, by the shift identification, we can suppose $c=0$. We then have
\[(+,W,\widetilde{P},0)+(-,W', \widetilde{P'},0)=\begin{cases}(+,W-W',\widetilde{P}-\widetilde{P'},0)&\text{if $W\geqslant W'$ and $\widetilde{P}\geqslant\widetilde{P'}$}\\
(+,W-W'-1,\widetilde{P'}-\widetilde{P},0)&\text{if $W>W'$ and $\widetilde{P}<\widetilde{P'}$}\\
(-,0,\widetilde{P'}-\widetilde{P},0)&\text{if $W=W'$ and $\widetilde{P}<\widetilde{P'}$}\\
\end{cases}\]
\noindent the other cases being obtained by a simple symmetrization.

There is no need for such exhaustion of cases in the context of ${\cal Q}_{\rm DC}$, which is another argument in favor of its use. Nevertheless, the opposite of an element is less simply written: $-(\delta,\widetilde{P})=(-\delta-1,-\widetilde{P})$. To avoid this quite counterintuitive minus $1$, a possibility would be to define the circular words $\widetilde{P}$ in an alphabet ${\cal A}$ symmetric around $0$ (and reconsider the theory accordingly). The problem is then that the expression of numbers would become even more different from the common one, and also that this symmetrization could work only for a odd value of the base $b$.

Eventually, what precedes leads to the following

\begin{theorem} ${\cal Q}_{\text{\rm WCP}}$ and ${\cal Q}_{\text{\rm DC}}$ are abelian groups.

\end{theorem}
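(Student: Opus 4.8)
The plan is to verify the abelian group axioms for both $({\cal Q}_{\text{\rm WCP}},+)$ and $({\cal Q}_{\text{\rm DC}},+)$ directly from the definitions given above, leveraging as much as possible the already-established group structure on ${\cal G}_\ell$ (Theorem \ref{StructGpe}) and on ${\cal G}^*$. I would concentrate the real work on ${\cal Q}_{\text{\rm DC}}$, whose addition is $\big(\delta,\widetilde{P}\big)+\big(\delta',\widetilde{P'}\big)=\big(\delta+\delta'+q(P+P'),\widetilde{P}+\widetilde{P'}\big)$, and then remark that ${\cal Q}_{\text{\rm WCP}}$ reduces to the same computation once the shift identification is used to align the pointers $c$, with the sign bookkeeping handled by the explicit case analysis already displayed.

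First I would check that the operation is \emph{well defined} on the quotient: one must confirm that replacing $\widetilde{P}$ by a power $\widetilde{P^k}$, or applying the ``$0.999\ldots=1$'' identification $(\delta,\widetilde\beta)\equiv(\delta+1,\widetilde 0)$, does not change the class of the sum. For the circular-powers identification this follows because $q(P+P')$ is defined via a common multiple of the period lengths and the addition on ${\cal G}^*$ is already consistent with powers (as noted after the definition of ${\cal G}^*$); for the ``$0.999\ldots$'' identification one computes that adding $\widetilde\beta$ contributes a carry of $1$ to the $\delta$-component exactly when the other periodic part is nonzero, matching the identification $(\delta,\widetilde\beta)\equiv(\delta+1,\widetilde 0)$. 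Next, \emph{associativity}: writing $S=\delta+\delta'+\delta''+\big(q(P+P')+q((P+P')+P'')\big)$ for one bracketing and the analogous expression for the other, the periodic components agree because $({\cal G}^*,+)$ is associative, and the $\delta$-components agree because the total carry $q$ extracted is symmetric in the three summands — this is the arithmetic identity $q_\ell(A+B)+q_\ell\big((A+B \bmod \beta^\ell)+C\big)=q_\ell(B+C)+q_\ell\big(A+(B+C\bmod\beta^\ell)\big)$, which is just the statement that integer addition of $A,B,C$ modulo $\beta^\ell$ produces a well-defined total carry. \emph{Commutativity} is immediate from commutativity of $+$ on $\D_b$ and on ${\cal G}^*$ together with $q(P+P')=q(P'+P)$. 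The \emph{neutral element} is $(0,\widetilde 0)$, and the \emph{inverse} is $(-\delta-1,-\widetilde P)$ when $\widetilde P\neq\widetilde 0$ (and $(-\delta,\widetilde 0)$ otherwise), which one checks against the defining formula, noting that $\widetilde P+(-\widetilde P)=\widetilde\beta\equiv\widetilde 0$ in ${\cal G}^*$ forces the compensating $-1$ carry.

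For ${\cal Q}_{\text{\rm WCP}}$ I would first reduce to common pointer value $c$ via the shift identification and reduce to sign $+$ via the ``$0.999\ldots=1$'' identification (so no summand has periodic part $\widetilde 9$), then observe that on the subset of same-sign same-$c$ quadruples the addition formula is literally the ${\cal Q}_{\text{\rm DC}}$ formula with $\delta$ replaced by $W$, so all axioms transfer; the mixed-sign case is handled by the three-way case split already written out, whose consistency (that the three cases are exhaustive and compatible on the overlaps $W=W'$, $\widetilde P=\widetilde{P'}$) is a finite check. I expect the main obstacle to be the \textbf{well-definedness and associativity bookkeeping around the carry function $q$}: one must be careful that $q_\ell$ computed after unifying period lengths to a common multiple is independent of which common multiple is chosen, and that the carry genuinely behaves like the carry of ordinary base-$b$ addition (in particular that it is at most a single unit in the relevant comparisons), since this is where the informal phrase ``it is easily proved that such a definition is consistent'' hides the only real content. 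Everything else is a routine transcription of the integer-addition axioms through the isomorphisms ${\cal G}_\ell\cong\Z/(b^\ell-1)\Z$ and the module action of $\D_b$ on $\widetilde{\cal A}$ described in Section \ref{RingD}.
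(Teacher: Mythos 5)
Your proposal is correct and follows exactly the route the paper intends: the paper in fact states this theorem with no written proof beyond the phrase ``what precedes leads to the following,'' relying on the group structure of ${\cal G}^*$, the carry function $q$, the shift identification, and the sign case analysis --- precisely the ingredients you assemble. You have also correctly located the only nontrivial content (well-definedness of $q$ under change of common period length and compatibility of the carry with the $0.999\ldots=1$ identification), which the paper leaves entirely implicit.
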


\subsection{0.999\ldots=1 from the additive structure}\label{0999Additive}

The additive structure defined on section \ref{AbelianGroupQ}, either on the triples $(W,\widetilde{P},c)$ or on the pairs $(\delta,\widetilde{P})$, provides a powerful argument to make the equality $0.999\ldots=1$ sensible.  Indeed, observe that, under the rules defining the addition, we have, for any $\widetilde{P'}\neq\widetilde{0^\ell}$:

\[(W+W'+1,\widetilde{P'},c)=\left\{\begin{array}{l}(W,\widetilde{\beta},c)+(W',\widetilde{P'},c)\\(W+1,\widetilde{0},c)+(W',\widetilde{P'},c)\end{array}\right.\]
\noindent and
\[(\delta+\delta'+1,\widetilde{P'})=\left\{\begin{array}{l}(\delta,\widetilde{\beta})+(\delta',\widetilde{P'})\\(\delta+1,\widetilde{0})+(\delta',\widetilde{P'})\end{array}\right..\]

Therefore, the cancellation property ($a+x=b+x\Rightarrow a=b$) strongly supports the identifications $(W,\widetilde{\beta},c)=(W+1,\widetilde{0},c)$ and $(\delta, \widetilde{\beta})=(\delta+1,\widetilde{0})$, which corresponds to the equality  $0.999\ldots=1$.

It is an important fact that the identification between $0.999...$ and 1 is required for the additive structure of the set of rational numbers. It comes from algebra, and not from analysis for the completness of $\mathbb{R}$ as often mentioned, even if this identity has implication for continuum of the real numbers set.

It is worth mentioning that in the "equation method" to prove that $0.\overline{9}=1$ (in which $0.\overline{9}$ is written as $x$, to write that $10x=9+x$ then $9x=9$, hence $x=1$), the cancellation property is used as well. Hence, the process in itself is directly linked to the assumption that the property is legitimate.

\subsection{The $b$-adic case}\label{LeCasbadique}

For the record, let us also mention briefly that the previous constructions also allow to define periodic $b$-adic numbers, which are also the ones that are rational in $\Q_b$ by a classical theorem analogous to Theorem \ref{Main}. Instead of $(W,\widetilde{P},c)$ or $(\delta, \widetilde{P})$, we may write $(\widetilde{P},W,c)$ and $(\widetilde{P},\delta)$, since $b$-adic numbers are those whose expansion in base $b$ has infinitely many digits to the left. The identifications to be made are the same as in the case of rational numbers, with the only exception of the ``$0.999\ldots=1$'' one, which has to be replaced by a different one, namely the ``$\ldots 999=-1$'' identification, that writes $(\widetilde{\beta},W,c)\equiv (\widetilde{0},W-1,c)$ for the WCP representation, and $(\widetilde{\beta},\delta)\equiv(\widetilde{0},\delta-1)$ for the DC one.

\section{Multiplicative structure}\label{Mult}

\subsection{Preliminaries}\label{CircMult}

Theorem \ref{StructGpe} could provide us a notion of multiplication for circular words. A possible presentation of it is to consider the set of circular words of length $\ell$ as the quotient ring $\Z[X]/(X^\ell-1,X-b)$. The point is that such a multiplication does not correspond to multiplication of periodic parts of $b$-expansion of rational numbers. 

Let us consider two circular words $\widetilde{W}$ and $\widetilde{X}$ of lengths $\ell$ and $\ell'$ respectively. Their product $\widetilde{W}\times\widetilde{X}$ may be defined as the circular word $\widetilde{Y}$ of length $n$ such that $(W/\beta^\ell)\times(X/\beta^{\ell'})=Y/\beta^n$, provided that the existence of $n$ is ensured\footnote{This idea may be indirectly recognized in some multiplications made by Marsh (1742), where he considers only expressions like $0.\overline{M}$ to be multiplied together. Nevertheless, since Marsh does not consider circular words independently of numbers, he does not operate directly with them.}. For example, since $0.\overline{12}\times 0.\overline{4}=(12/99)\times(4/9)=53872/999999$, we may define $\widetilde{12}\times\widetilde{4}$ as $\widetilde{053872}$. The only contentious point is that, to make this definition sensible, $\widetilde{0}$ and $\widetilde{9}$ should not be identified anymore, since $\widetilde{0}$ is absorbing whereas $\widetilde{9}$ is neutral.

Not only we know that no multiplication in ${\cal G}_\ell$ can therefore correspond to the multiplication of periodic parts of rational numbers, but an important difficulty for concrete multiplication of circular words also derived from the fact that, as illustrated by Proposition \ref{CasPartFauxFois},  the length increases surprisingly fast. Attempts made by English authors like those mentioned in Section \ref{Histoire} to provide extensive algorithms were valuable, especially Marsh's, but remained cumbrous since none of them endorsed a theoretical point of view which is helpful to get both a complete algorithm and a rigorous proof of its validity.  

In Section \ref{LongueurProduit}, we provide a generalization of Proposition \ref{CasPartFauxFois} for the existence and smallest value of $n$ for the length of $Y$, knowing those of $W$ and $X$ (under the previous notation). Such an $n$ allows to define $\widetilde{W}\times\widetilde{X}$ as the circular word $\widetilde{Y}$ of length $n$ such that $Y=WX\beta^n/(\beta^{\ell}\times\beta^{\ell'})$. Then, an explicit expression of $\beta^n/(\beta^{\ell}\times\beta^{\ell'})$ is given in Section \ref{AlgoMultCirc} for $\ell=\ell'=1$, obtained as a consequence of the proof of Theorem  \ref{99FauxFois99} (and shown as being a simplifying assumption without loss of generality). Eventually, Section \ref{AlgoMult} provide an algorithm for multiplication in $\Q$ from circular words.

\subsection{Length of a product}\label{LongueurProduit}

This section is devoted to the following result, that extends what we proved in Proposition \ref{CasPartFauxFois} in the particular case of $b=10$ and $\ell=\ell'=2$.

\begin{theorem}\label{99FauxFois99} Let $\ell$ and $\ell'$ be two positive integers. The smallest positive integer $n$ fort which $b^n-1$ is divided by $(b^\ell-1)(b^{\ell'}-1)$ is $n=(b^{\gcd(\ell,\ell')}-1)\cdot\lcm(\ell,\ell')$.
\end{theorem}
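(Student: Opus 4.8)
The plan is to reduce everything to the divisibility criterion already established in Theorem~\ref{gcdProperty999}$'$ (namely $b^\ell-1 \mid b^n-1 \iff \ell \mid n$), applied with a cleverly chosen auxiliary base. Write $d=\gcd(\ell,\ell')$ and $m=\lcm(\ell,\ell')$, and set $B=b^d$. The first step is to reformulate both divisibility conditions in terms of $B$: since $\ell=d\ell_0$ and $\ell'=d\ell'_0$ with $\gcd(\ell_0,\ell'_0)=1$, we have $b^\ell-1=B^{\ell_0}-1$ and $b^{\ell'}-1=B^{\ell'_0}-1$, so $(b^\ell-1)(b^{\ell'}-1)=(B^{\ell_0}-1)(B^{\ell'_0}-1)$. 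The claimed answer also rewrites cleanly: $n=(B-1)m=(B-1)d\ell_0\ell'_0$.

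The heart of the argument is the identity
\[\frac{(B^{\ell_0}-1)(B^{\ell'_0}-1)}{B-1}=\frac{B^{\ell_0}-1}{B-1}\cdot(B^{\ell'_0}-1),\]
where the first factor $1+B+\cdots+B^{\ell_0-1}$ is an integer that is \emph{coprime to $B-1$ is false in general}, so here is the real subtlety: modulo $B-1$ each power $B^i\equiv 1$, hence $\frac{B^{\ell_0}-1}{B-1}\equiv \ell_0 \pmod{B-1}$ and likewise for $\ell'_0$. I would then argue that $(b^\ell-1)(b^{\ell'}-1)$ divides $b^N-1$ iff $(B^{\ell_0}-1)(B^{\ell'_0}-1)$ divides $B^{N/d}-1$ when $d\mid N$ (and note $d\mid N$ is forced since $b^\ell-1\mid b^N-1$ already requires $\ell\mid N$, hence $d\mid N$). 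So, writing $N=dK$, the problem becomes: find the least $K$ such that $(B^{\ell_0}-1)(B^{\ell'_0}-1)\mid B^K-1$; the answer should be $K=(B-1)\ell_0\ell'_0$, giving $N=dK=(B-1)m$ as desired.

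For this reduced problem, I would proceed in two directions. For divisibility ($\Leftarrow$): with $K=(B-1)\ell_0\ell'_0$, both $\ell_0\mid K$ and $\ell'_0\mid K$, so by Theorem~\ref{gcdProperty999}$'$ both $B^{\ell_0}-1$ and $B^{\ell'_0}-1$ divide $B^K-1$; since $\gcd(\ell_0,\ell'_0)=1$, the LCM version of Theorem~\ref{gcdProperty999} gives $\gcd(B^{\ell_0}-1,B^{\ell'_0}-1)=B^{\gcd(\ell_0,\ell'_0)}-1=B-1$, so their product divides $(B^K-1)(B-1)$; one then needs the extra factor of $B-1$, which comes from the $p$-adic valuation bookkeeping below. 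For minimality: suppose $(B^{\ell_0}-1)(B^{\ell'_0}-1)\mid B^K-1$. Then $\ell_0\mid K$ and $\ell'_0\mid K$, so $m_0:=\ell_0\ell'_0=\lcm(\ell_0,\ell'_0)\mid K$; write $K=m_0 t$. Now compare $B-1$-valuations (or, prime by prime, use the lifting-the-exponent lemma): the exact power of $B-1$ dividing $(B^{\ell_0}-1)(B^{\ell'_0}-1)$ is, generically, $(B-1)^2$ times the contribution of $\ell_0\ell'_0$, while $v(B^K-1)$ grows only when $t$ absorbs a further factor of $B-1$; forcing the inequality of valuations yields $(B-1)\mid t$, i.e. $K\geqslant(B-1)m_0$.

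The main obstacle will be pinning down the exact power of each prime $q$ dividing the quantities $B^{\ell_0}-1$, $B^{\ell'_0}-1$, and $B^K-1$, i.e. making the valuation argument rigorous rather than heuristic; this is exactly where the lifting-the-exponent lemma (or a direct telescoping computation as in the proof of Theorem~\ref{gcdProperty999}) enters, and where the coprimality $\gcd(\ell_0,\ell'_0)=1$ is used to ensure the ``orders'' of $B$ modulo the two factors interact multiplicatively. A clean alternative, closer to the combinatorial spirit of Proposition~\ref{CasPartFauxFois}, is to factor $B^{\ell_0}-1=(B-1)r_{\ell_0}(B)$ and $B^{\ell'_0}-1=(B-1)r_{\ell'_0}(B)$ where $r_k(B)=1+B+\cdots+B^{k-1}$ is the base-$B$ repunit of length $k$, reducing the claim to: the least $K$ with $(B-1)^2 r_{\ell_0}(B) r_{\ell'_0}(B)\mid B^K-1$ is $(B-1)\ell_0\ell'_0$ --- and then the divisibility-by-$(B-1)$ conditions on repunits are governed by digit-sum criteria exactly as in the proof of Proposition~\ref{CasPartFauxFois}, while the $\ell_0,\ell'_0$ part is handled by Theorem~\ref{gcdProperty999}. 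I would present the proof along these repunit lines, as it keeps the whole argument inside the elementary toolkit already developed in Section~\ref{GenFacts}.
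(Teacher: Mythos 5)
Your reduction to the coprime case is sound and the answer is correctly identified: setting $d=\gcd(\ell,\ell')$, $B=b^d$, $\ell=d\ell_0$, $\ell'=d\ell'_0$ with $\gcd(\ell_0,\ell'_0)=1$, the problem does become ``find the least $K$ with $(B^{\ell_0}-1)(B^{\ell'_0}-1)\mid B^K-1$'', and the target is $K=(B-1)\ell_0\ell'_0$. The gap is that this reduced statement --- which is the entire content of the theorem --- is never actually proved. In the sufficiency direction you only reach ``the product divides $(B^K-1)(B-1)$'' and defer the missing factor of $B-1$ to ``$p$-adic valuation bookkeeping below''; in the necessity direction you assert that comparing ``$B-1$-valuations'' forces $(B-1)\mid t$. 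Neither computation is carried out, and as sketched both are shaky: $B-1$ is composite in general, so ``the exact power of $B-1$ dividing $X$'' is not a valuation and the argument must be run prime by prime; the lifting-the-exponent identities you would invoke behave differently at $q=2$ (where $v_2(B^K-1)$ involves $v_2(B+1)$ and a case split on the parity of $K$), which is precisely where a ``generic'' count can fail to be exact; and the identity $\gcd(B^{\ell_0}-1,B^{\ell'_0}-1)=B^{\gcd(\ell_0,\ell'_0)}-1$ does not follow from Theorem \ref{gcdProperty999} as stated and would need its own proof. The argument can be completed along these lines, but as written the hardest step is acknowledged rather than done.

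For comparison, the paper avoids all prime-by-prime analysis. Knowing $n=km$ with $m=\lcm(\ell,\ell')$, it writes
\[\frac{b^{km}-1}{b^\ell-1}=\sum_{0\leqslant j<ka'}b^{\ell j}=:S_k,\]
and computes the residue of $S_k$ modulo $b^{\ell'}-1$ exactly: replacing each $b^{\ell j}$ by $b^{\ell j\ \mathrm{mod}\ \ell'}$ and using $\gcd(a,a')=1$ to see that the exponents $\ell j\ \mathrm{mod}\ \ell'$ sweep the multiples of $d$ below $\ell'$ each exactly $k$ times, one gets $S'_k=k\,(b^{\ell'}-1)/(b^d-1)$. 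Hence $(b^{\ell'}-1)\mid S_k$ if and only if $(b^d-1)\mid k$, which gives both the divisibility and the minimality in one stroke, with an exact ``iff'' rather than a valuation inequality. If you want to salvage your route, the cleanest fix is to replace the LTE/repunit sketch by exactly this kind of explicit remainder computation.
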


In particular, observe that the only case for which $n$ is not strictly bigger than both $\ell$ and $\ell'$ is the trivial one $b=2$ and $\min(\ell,\ell')=1$, which has no interesting structure since a rational number with a periodic part of length $1$ in base $2$ necessarily belongs to $\D_2$.

The case $b=2$ is also interesting in that it is the only case for which, when $\ell$ and $\ell'$ are mutually prime, the smallest integer $n$ such that $b^n-1$ divides $(b^\ell-1)(b^{\ell'}-1)$ is the same as the one such that $b^n-1$ divides both $b^\ell-1$ and $b^{\ell'}-1$ (see Theorem \ref{gcdProperty999}).

Theorem \ref{99FauxFois99} shows how worse the circular standpoint is compared to usual fractions as regards practical calculation. Observe also that this result, which expresses the (typical) length $n$ of $0.\overline{M}\times 0.\overline{N}$ as a function of the lengths $\ell$ (of $M$) and $\ell'$ (of $N$), is very different from Theorem \ref{gcdProperty999} which gives that the length of the periodic part of $1/(mn)$ is the l.c.m. of the lengths of $1/m$ and $1/n$ (as asserted first by Wallis) when $m$ and $n$ are mutually primes.

The proof of Theorem \ref{99FauxFois99} can be given in a standard way, but we will also provide a more fancy presentation. This latter shows how a convenient notation is sometimes useful to help the understanding of theoretical calculations that are otherwise quite tiresome.

\subsubsection{Notation and beginning of the proof}

We put $\gcd(\ell,\ell')=d$, $\ell=ad$ and $\ell'=a'd$. (Thereofore, $a$ and $a'$ are mutually primes.) Also, we write $\lcm(\ell,\ell')=m$, so $a'\ell=a\ell'=m$.

By Theorem \ref{gcdProperty999}, any $n$ such that $b^n-1$ is a multiple of $b^\ell-1$ belongs to $\ell\N^*$, and the same holds for $\ell'$ instead of $\ell$. Therefore, we have $n\in m\N^*$. What remains to be proved is that the smallest positive integer $k$ for which $b^{km}-1$ is divided by $(b^\ell-1)(b^{\ell'}-1)$ is $k=b^{d}-1$.

First,  divide $b^{km}-1$ by $b^\ell-1$. The formula for the sum of the first terms of a geometric sequence gives 
\[\frac{b^{km}-1}{b^\ell-1}=\frac{b^{ka'\ell}-1}{b^\ell-1}=\sum_{0\leqslant j<ka'}b^{\ell j}=:S_k.\]

Now, our aim is to compute the Euclidean division of $S_k$ by $b^{\ell'}-1$, looking for the smallest $k$ for which the rest is equal to $0$. It is for this part of the proof that a fancy presentation can be helpful. 

\subsubsection{A``fancy'' presentation of the end of the proof}

In numeration in base $b$, the value $S_k$ can be written as the concatenation of $ka'$ copies of the word $0^{\ell-1}1$. Also, in the Euclidean division of $S_k$ by $b^{\ell'}-1$ we are preparing to do, we are not interested in the quotient but in the rest.

Consider one of the digits $1$ in the base $b$-expansion of $S_k$, located at some position $\ell j$ (i.e. corresponding to $b^{\ell j}$), and apply to it the standard way to divide by $b^{\ell'}-1$. The successive steps makes our digit $1$ ``jump'' by $\ell'$ places to the right, again and again, until it attains some place among the $\ell'$ rightmost digits. Now, consider the digit $1$ located at the position $\ell(j-1)$, then apply to it the same algorithm. This new $1$ is now jumping to the right, $\ell'$ places at a time, until it reaches one of the $\ell'$ rightmost places. The (circular) distance between the positions of our two final $1$s is equal to $\ell$ modulo $\ell'$.

For the sake of clarity, assume first that $\ell$ and $\ell'$ are mutually primes. Therefore, the successive $1$ that compose $S_k$ are stacking together in all the possible $\ell'$ rightmost places at the end of their jumps. After the jumps of the first $\ell'$ $1$s, each of these places contains exactly one $1$. After the jump of the next $\ell'$ ones, each of the $\ell'$ rightmost places contains exactly two $1$s, etc. Therefore, for $S_k$ to be divisible by $b^{\ell'}-1$ we need a number of $1$s in $S_k$ multiple of $\ell'\beta$, so that the jumps of all the $1$s eventually produce an expression of the form $(p\beta)^{\ell'}$ at the rightmost places, which is equal to $p\cdot \beta^{\ell'}$. The smallest choice is of course $p=1$, so the right value for $k$ is the value for which the number of $1$s in $S_k$ is exactly $\ell'\beta$. The number of $1$s in $S_k$ being equal to $ka'=k\ell'$ (since $\ell$ and $\ell'$ are mutually primes), we eventually get that $k=\beta$.

Now for the general case of $\gcd(\ell,\ell')=d$. The final positions of the $1$s after their jumps are of the form $id$, for all integers $i$ such that $0\leqslant id<\ell'$. And, as before, the $\ell'/d=a'$ first $1$s displaced are all located at different places. The next $a'$ ones stack on the first $a'$ ones, etc., so after $\beta a'$ $1$s of $S_k$ made their jumps, we get a rest which is the word $(0^{d-1}\beta)^{a'}$. Take $a'$ more $1$s of $S_k$ and we get the word $(0^{d-1}b)^{a'}$, whose proper $b$-expansion writes $(0^{d-2}10)^{a'}$. It is then quite easy to see that we need $\beta^d a'$ digits $1$ in $S_k$ to reach a rest equal to $\beta^{\ell'}$. Since the number of $1$s in $S_k$ is $ka'$, we eventually get that $k=\beta^d=b^{\gcd(\ell,\ell')}-1$.

\subsubsection{Standard writing of the end of the proof}\label{StandardWriting}

Let $j$ be such that $0\leqslant j<ka'$, and let $\ell j=q\ell'+r$ be the Euclidean division of $\ell j$ by $\ell'$, where $0\leqslant r<\ell'$. The Euclidean division of $b^{\ell j}$ by $b^{\ell'}-1$ is therefore
\[b^{\ell j}=\left(b^r\frac{1-b^{q\ell'}}{1-b^{\ell'}}\right)\cdot (b^{\ell'}-1)+b^r.\]

In the sequel, we write $u\ \mbox{mod}\ v$ for the rest of the Euclidean division of $u$ by $v$. Hence we can write

\begin{eqnarray*}
S_k&=&\sum_{0\leqslant j<ka'}b^{\ell j}\\
&=&\sum_{0\leqslant j<ka'}\left(b^{\ell j\ \mbox{\scriptsize mod}\ \ell'}\frac{1-b^{q\ell'}}{1-b^{\ell'}}\right)\cdot (b^{\ell'}-1)+b^{\ell j\ \mbox{\scriptsize mod}\ \ell'}\\
&=&\left(\sum_{0\leqslant j<ka'}b^{\ell j\ \mbox{\scriptsize mod}\ \ell'}\frac{1-b^{q\ell'}}{1-b^{\ell'}}\right)\cdot (b^{\ell'}-1)+\sum_{0\leqslant j<ka'} b^{\ell j\ \mbox{\scriptsize mod}\ \ell'}.
\end{eqnarray*}

Let us write $S'_k$ for the last sum of the latter expression. Of course, $S'_k$ is positive and increases with $k$. Therefore, if we can find a $k$ for which $S'_k=b^{\ell'}-1$, then this will ensure that this $k$ is the one we are looking for.

Recall that $\ell=ad$ and $\ell'=a'd$, so we can write
\begin{eqnarray*}
S'_k&=&\sum_{0\leqslant j<ka'}b^{daj\ \mbox{\scriptsize mod}\ da'}\\
&=&\sum_{0\leqslant j<ka'}\Big(b^{aj\ \mbox{\scriptsize mod}\ a'}\Big)^d\\
&=&\sum_{0\leqslant s<k}\sum_{0\leqslant t <a'}\Big(b^{a(sa'+t)\ \mbox{\scriptsize mod}\ a'}\Big)^d\\
&=&\sum_{0\leqslant s<k}\sum_{0\leqslant t <a'}\Big(b^{at\ \mbox{\scriptsize mod}\ a'}\Big)^d\\
&=&k\sum_{0\leqslant t<a'}\Big(b^{at\ \mbox{\scriptsize mod}\ a'}\Big)^d.
\end{eqnarray*}
Since $a$ and $a'$ are mutually primes, this is equal to ${\displaystyle k \sum_{0\leqslant i<a'}(b^i)^d}$, so 
\[S'_k=k\frac{1-b^{da'}}{1-b^d}=k\frac{1-b^{\ell'}}{1-b^d}.\]
Therefore, the value $k=b^d-1$ makes $S'_k$  reach the value $b^{\ell'}-1$. Hence, $k=b^d-1$ is the value we are looking for, and Theorem \ref{99FauxFois99} is proved.

\subsection{Multiplication of circular words}\label{AlgoMultCirc}

As mentioned in Section \ref{CircMult}, product of circular words of length $\ell$ and $\ell'$ involves the expression $\beta^n/(\beta^\ell\times\beta^{\ell'})$, where $n$ is given by Theorem \ref{99FauxFois99}. The standard writing of the end of the proof of this theorem (Section \ref{StandardWriting}) already provides a part of the answer. Indeed, with the notation in use there, for   $k=b^d-1$ we have 
\[\frac{\beta^n}{\beta^\ell\times\beta^{\ell'}}=1+\sum_{0\leqslant j<(b^d-1)a'}b^{\ell j\ \mbox{\scriptsize mod}\ \ell'}\frac{1-b^{q\ell'}}{1-b^{\ell'}},\]
\noindent the initial $1$ being a consequence of the fact obtained in the end of Section  \ref{StandardWriting} that $S'_k=b^{\ell'}-1$ for $k=b^d-1$.

The words $\widetilde{W}$ and $\widetilde{X}$ being of length $\ell$ and $\ell'$, the circular powers identification allows us to assume $\ell=\ell'$. Now put $B=b^\ell$, so that we may replace the alphabet ${\cal A}=\{0,\ldots, b-1\}$ by ${\cal A}'=\{0,\ldots, b^\ell-1\}$. In practice, such a change of base from $b$ to $B$ consists in grouping digits by blocks of length $\ell$. This will not provide an optimal algorithm, since such a change in notation will lead to a product $\widetilde{W}\times\widetilde{X}$ of length $(b^{aa'd}-1)aa'd$ instead of $(b^d-1)aa'$ (Theorem \ref{99FauxFois99}). Nevertheless, not only we are not really interested in optimality here (since we already know from Proposition \ref{CasPartFauxFois} that it is pointless) but such a simplification will greatly clarify the following. (Without it, we would be led to quite technical considerations about continued fraction expansion of $a/a'$.)

What precedes shows that, up to a change of basis, we may assume that $\ell=\ell'=1$, so $n=b-1$. Our aim is therefore to get an explicit expression of $\beta^n/(\beta^\ell\times\beta^{\ell'})=\beta^{b-1}/(\beta\times\beta)$. The previous equality becomes
\begin{eqnarray*}\frac{\beta^{b-1}}{\beta\times\beta}&=&1+\sum_{0\leqslant j<b-1}\frac{1-b^j}{1-b}\\
&=&1+\sum_{0\leqslant j<b-1}\sum_{0\leqslant i<j}b^i\\
&=&1+\sum_{0\leqslant i<b-2}\sum_{i<j<b-1}b^i\\
&=&1+\sum_{0\leqslant i<b-2}(b-i-2)b^i.
\end{eqnarray*}

Hence, we can eventually define the multiplication of circular words of length 1 (and therefore of any length, by the circular powers identification and a change of base) in the following way:

\begin{definition}\label{ProdCirc} Let $p$ and $p'$ be two letters of ${\cal A}=\{0,\ldots, \beta\}$. The product $\widetilde{p}\times\widetilde{p'}$ is the circular word $\widetilde{Q}$ of length $\beta$  such that
\[N(Q)=pp'\left(1+\sum_{0\leqslant i<b-2}(b-i-2)b^i\right).\]
\end{definition}

Complementary algorithmic considerations could be of some help to shorten the calculation of such a product, since the last factor has a particular form. (For example it is $12345679$ in base ten.) For the same reason as before, we will not consider it here.

\subsection{The fields ${\cal Q}_{\text{\rm WCP}}$ and ${\cal Q}_{\text{\rm DC}}$}\label{AlgoMult}

From Definition \ref{ProdCirc} we can deduce the multiplicative structure on ${\cal Q}_{\text{\rm WCP}}$ and ${\cal Q}_{\text{\rm DC}}$. For the first one, the definition is
\[(W,\widetilde{P},c)\times(W',\widetilde{P'},c'):=(WW'+q(WP')+q(W'P),W\widetilde{P'}+W'\widetilde{P}+\widetilde{P}\times\widetilde{P'},c+c').\]

(According to the assumptions made in Section \ref{AbelianGroupQ}, $q(WP')$ stands for $q_{\ell'}(WP')$ where $\ell'$ is the length of $\widetilde{P'}$. The same remark holds for other expressions of the same kind.)

Now for ${\cal Q}_{\text{\rm DC}}$. For $\delta=kb^c\in\D_b$ (with $k\in\Z$ and $c\in\Z$),  $q(\delta P')$ stands for $q(kP')b^c$, and $\widetilde{r}(\delta P')$ for $\sigma^c(\widetilde{r(kP')}$, where $r(kP')$ is the rest of the Euclidean division of $kP'$ by $\beta^\ell$. With the help of Section \ref{RingD}, we then have
\[(\delta,\widetilde{P})\times(\delta',\widetilde{P'})=(\delta\delta'+q(\delta P')+q(\delta'P), \widetilde{r}(\delta P')+\widetilde{r}(\delta'P)+\widetilde{P}\times\widetilde{P'}).\]

To get the field structure, it remains to show how to define a division. It is not very difficult to generalize the usual algorithm of long division 
to get a combinatorial definition of the division (see (Vivier, 2015) for some examples). Eventually, the fact that we have indeed built the field $\Q$ can be proved by showing that the application $N$ defined by
\[N(s,W,\widetilde{P},c):=s\cdot \left(N(W)+\frac{N(P)}{\beta^\ell}\right)\cdot b^c\]
\noindent is a bijective morphism of rings between ${\cal Q}_{\rm WCP}$ and $\Q$ and
\[N(\delta,\widetilde{P})=\delta+\frac{N(P)}{\beta^\ell}\]
\noindent is another between ${\cal Q}_{\rm DC}$ and $\Q$.

Eventuelly, we get the following

\begin{theorem} ${\cal Q}_{\text{\rm WCP}}$ and ${\cal Q}_{\text{\rm DC}}$ are fields, both isomorphic to $\Q$.

\end{theorem}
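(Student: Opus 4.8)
The plan is to prove the theorem exactly along the lines sketched just above: show that the two explicit maps $N$ are \emph{well-defined bijections} from the respective quotient sets onto $\Q$ that \emph{intertwine} the combinatorially defined addition and multiplication with the usual operations of $\Q$. Once this is done, every ring axiom — associativity, commutativity, distributivity, existence of additive inverses, and existence of a multiplicative inverse for every nonzero element — can be pulled back along $N^{-1}$ from $\Q$; in particular each of ${\cal Q}_{\rm WCP}$ and ${\cal Q}_{\rm DC}$ inherits the field structure of $\Q$, and the two become isomorphic to each other through $N_{\rm DC}^{-1}\circ N_{\rm WCP}$. (The combinatorial long division only exhibits the inverse explicitly; it is not logically needed for the existence of the field structure.)

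\textbf{Well-definedness.} First I would check that $N$ is constant on each equivalence class, i.e.\ that it respects the four (resp.\ two) identifications. Leading zeroes are immediate from the definition of $N$ on words. For the circular powers identification, $N(P^k)=N(P)\cdot\frac{b^{k|P|}-1}{b^{|P|}-1}$ while $\beta^{|P^k|}$ represents the integer $b^{k|P|}-1$, so $N(P^k)/\beta^{|P^k|}=N(P)/\beta^{|P|}$. For the shift identification one computes directly that pushing the leading letter of $\widetilde P$ into $W$ and decreasing $c$ by $1$ leaves $b^c\big(W+N(P)/\beta^{|P|}\big)$ unchanged. Finally the ``$0.999\ldots=1$'' identification amounts to $N(\beta^\ell)/\beta^\ell=(b^\ell-1)/(b^\ell-1)=1$, whence $N(W,\widetilde\beta,c)=b^c(W+1)=N(W+1,\widetilde 0,c)$, and likewise for pairs; the extra $-1$ in the opposite of a $(\delta,\widetilde P)$ is accounted for in the same way. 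For ${\cal Q}_{\rm DC}$ one must also check that the ring ${\cal D}_b$ is correctly identified with $\D_b$, which follows from its two defining relations.

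\textbf{Morphism property.} For addition, I would substitute the defining formula $(\delta,\widetilde P)+(\delta',\widetilde{P'})=(\delta+\delta'+q(P+P'),\widetilde P+\widetilde{P'})$ and use that, by Theorem \ref{StructGpe}, once the period lengths are aligned to a common multiple $\ell$ the circular sum $\widetilde P+\widetilde{P'}$ represents the residue of $P+P'$ modulo $b^\ell-1$, while $q(P+P')$ is precisely the corresponding quotient; hence $N$ of the sum equals $\delta+\delta'+(P+P')/(b^\ell-1)=N(\delta,\widetilde P)+N(\delta',\widetilde{P'})$. The same bookkeeping handles ${\cal Q}_{\rm WCP}$ once the decimal points are aligned by the shift identification, and the three-way case split for differing signs reduces to this after one more use of ``$0.999\ldots=1$''. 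For multiplication, I would expand the defining formula by distributivity and reduce, via Section \ref{RingD}, to the single nontrivial ingredient: that the circular product $\widetilde P\times\widetilde{P'}$ of Definition \ref{ProdCirc} represents $\big(N(P)/\beta^{|P|}\big)\big(N(P')/\beta^{|P'|}\big)$ times the relevant power $\beta^n$. This is exactly what the length computation of Theorem \ref{99FauxFois99} and the explicit evaluation of $\beta^n/(\beta^\ell\beta^{\ell'})$ in Section \ref{AlgoMultCirc} were set up to deliver; together with the fact that the carry terms $q(WP')$, $q(W'P)$ (resp.\ $q(\delta P')$, $q(\delta'P)$) record the integer overflow, one obtains $N\big((W,\widetilde P,c)\times(W',\widetilde{P'},c')\big)=N(W,\widetilde P,c)\,N(W',\widetilde{P'},c')$, and similarly in the DC case.

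\textbf{Bijectivity and conclusion.} Surjectivity is Theorem \ref{Main}: every $x\in\Q$ (first for $x\geqslant 0$, then using the sign) has an ultimately periodic $b$-expansion, which reads off directly as a triple $(W,\widetilde P,c)$ with $N(W,\widetilde P,c)=x$, and analogously as a pair $(\delta,\widetilde P)$ after shifting the periodic part to begin at the ``decimal'' point. For injectivity, I would bring an arbitrary representative to a canonical form — aperiodic part with no superfluous leading zero, period of minimal length, $\widetilde P\neq\widetilde\beta$ — and invoke the classical uniqueness of the ultimately periodic $b$-expansion in this normalized form; two representatives with the same image under $N$ then coincide in canonical form, hence are equal in the quotient. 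Since $N$ is thus a bijective ring morphism onto the field $\Q$, transporting inverses along $N^{-1}$ shows that ${\cal Q}_{\rm WCP}$ and ${\cal Q}_{\rm DC}$ are fields isomorphic to $\Q$. \textbf{The main obstacle} is the multiplication step: both verifying that $\widetilde P\times\widetilde{P'}$ from Definition \ref{ProdCirc} is independent of the common period length chosen (so that the product descends to the quotient) and checking that it genuinely computes the product of the corresponding rationals — a computation whose heart is precisely Theorem \ref{99FauxFois99} and the reductions of Section \ref{AlgoMultCirc}.
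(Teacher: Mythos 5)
Your proposal is correct and follows essentially the same route as the paper, which likewise establishes the theorem by exhibiting the maps $N$ as bijective ring morphisms onto $\Q$ and transporting the field structure back; you simply spell out the well-definedness, morphism, and bijectivity checks that the paper leaves implicit. Your observation that the combinatorial long division is not logically required (inverses can be pulled back along $N^{-1}$) is a fair streamlining of the paper's remark that division "remains to be defined."
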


\section{Irrationality of algebraic numbers}\label{ThIrr}

As recalled in introduction (see (Bullynck, 2009) for details), Lambert once tried to prove the irrationality of $\pi$ by showing that its decimal expansion is aperiodic. This idea, unsuccessful for $\pi$, can be applied to show the irrationality of simpler numbers, namely the square root of integers. Such a proof seems to be new. Here is a general statement:

\begin{theorem}\label{IrrationaliteUnitaires} Let $Q\in\Z[X]$ be a unitary polynomial. Any (real) root of it is either an integer or an irrational number.

Also, let $Q\in\D_b[X]$ for some integer $b>1$, with coefficient for $X^{\deg(Q)}$ equal to $1$. Any (real) root of it is either in $\D_b$ or irrational.
\end{theorem}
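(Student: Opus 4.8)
The natural approach is a proof by contrapositive, in the spirit suggested by the paper's theme: if $\alpha$ is a rational root of a unitary polynomial $Q\in\Z[X]$, then the $b$-expansion of $\alpha$ is ultimately periodic (Theorem \ref{Main}), and I want to show that in fact it must terminate in a string of $0$s, i.e.\ $\alpha\in\D_b$; combined with the classical rational root theorem (which says a rational root of a monic integer polynomial is an integer), this would give that $\alpha$ is an integer. Actually the cleanest route is to prove the two statements of the theorem \emph{together} and in the reverse logical order: first establish the $\D_b$-version directly by a $b$-adic/combinatorial valuation argument, then deduce the $\Z$-version from it by observing that $\Z\subset\D_b$ for every $b$, so a rational root $\alpha$ of a monic $Q\in\Z[X]$ lies in $\D_b$ for all $b\geqslant 2$; but $\bigcap_{b\geqslant 2}\D_b=\Z$ (a rational $u/v$ in lowest terms lies in $\D_b$ iff every prime factor of $v$ divides $b$, and no single $v>1$ has all its prime factors dividing, say, both $2$ and $3$), hence $\alpha\in\Z$.

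First I would set up the $\D_b$-statement. Write $Q(X)=X^d+c_{d-1}X^{d-1}+\cdots+c_0$ with $c_i\in\D_b$, and suppose $\alpha\in\Q$ is a root but $\alpha\notin\D_b$. Write $\alpha=u/v$ in lowest terms; the hypothesis $\alpha\notin\D_b$ means $v$ has a prime factor $p$ not dividing $b$. The key is to look at the ``denominator in $\D_b$'', i.e.\ localize away from the primes dividing $b$: for a rational $x$, let $w_p(x)$ denote the $p$-adic valuation for a prime $p\nmid b$. Since $p\mid v$, we have $w_p(\alpha)=-w_p(v)<0$. Now evaluate valuations in the relation $\alpha^d=-(c_{d-1}\alpha^{d-1}+\cdots+c_0)$: since each $c_i\in\D_b$ we have $w_p(c_i)\geqslant 0$, so $w_p(c_i\alpha^i)\geqslant i\,w_p(\alpha)\geqslant (d-1)w_p(\alpha)$ for $i\leqslant d-1$; by the ultrametric inequality the right-hand side has valuation $\geqslant (d-1)w_p(\alpha)$, whereas the left-hand side has valuation exactly $d\,w_p(\alpha)<(d-1)w_p(\alpha)$ (strict because $w_p(\alpha)<0$), a contradiction. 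Hence no prime outside $b$ divides $v$, i.e.\ $\alpha\in\D_b$. This is the standard monic-polynomial/integral-closure argument run $p$-adically; it can alternatively be phrased with the long-division/periodicity language of the paper, but the valuation formulation is shortest.

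The hard part, and the part that deserves care, is making the argument genuinely ``in the spirit'' of the rest of the paper rather than a bare $p$-adic computation --- that is, if one wants the promised \emph{combinatorial} proof ``in the spirit of repetends'', one should replace $w_p$ by a statement about the length and eventual-zero structure of the periodic $b$-expansion, using Corollary \ref{Yaunell} and Theorem \ref{gcdProperty999} to control how periods of the $c_i$ and of powers of $\alpha$ interact. Concretely: a rational $x\notin\D_b$ has a periodic $b$-expansion whose period length $\ell(x)>0$ is determined by the part of its reduced denominator coprime to $b$, and $\ell(x^i)$ and $\ell(c_i x^i)$ relate to $\ell(x)$ via lcm's; the relation $\alpha^d=-\sum_{i<d}c_i\alpha^i$ then forces a $b$-expansion of period $0$ on the left to equal one of positive period on the right, which is impossible. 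Carrying this out rigorously requires a lemma on the period length of a sum and of a product of rationals in terms of the denominators (essentially Theorem \ref{gcdProperty999}'' and the coprimality bookkeeping of Proposition \ref{CasPartFauxFois}), and that lemma --- controlling period length under addition when the summands' denominators are \emph{not} coprime --- is the main obstacle; the $p$-adic valuation version sidesteps it entirely, at the cost of being less combinatorial. Finally, for the $\R$-statement as phrased (``any real root''), one notes that an irrational real root is already of the desired type, so only rational roots need treatment, and those are handled above; and the ``square root of an integer'' corollaries ($\sqrt2$, $\sqrt2+\sqrt3$, etc.) follow since these are roots of explicit monic integer polynomials ($X^2-2$, $X^4-10X^2+1$, \ldots) and are visibly not integers.
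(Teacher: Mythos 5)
Your proposal is correct, and its outer architecture coincides with the paper's: both prove the $\D_b$ statement first and then obtain the $\Z$ statement from $\bigcap_{b>1}\D_b=\Z$. The core of the argument, however, is genuinely different. You dispose of the $\D_b$ case by a $p$-adic valuation computation (for a prime $p\nmid b$ dividing the reduced denominator of $\alpha$, the term $\alpha^d$ has strictly smaller valuation than every term of $-\sum_{i<d}c_i\alpha^i$, contradicting the ultrametric inequality); this is the classical integral-closure argument and is perfectly sound. The paper instead stays inside its circular-word formalism: since $Q$ is monic, every power $\alpha^n$ with $n\geqslant\deg Q$ is a $\D_b$-linear combination of the \emph{fixed} elements $1,\alpha,\dots,\alpha^{q-1}$, so the period length of $\alpha^n$ is bounded uniformly in $n$; on the other hand a short pigeonhole argument (the values $(0,\widetilde{P}^n)$ decrease strictly to $0$, hence are pairwise distinct, and there are only finitely many circular words of each length) shows that the period length of $\widetilde{P}^n$ tends to infinity unless $\widetilde{P}$ is trivial. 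This neatly sidesteps the very obstacle you identified --- controlling period length under addition of rationals with non-coprime denominators --- not by abandoning combinatorics for valuations as you do, but by noting that the right-hand side involves the \emph{same} finite family of circular words for every $n$, so no general addition lemma is needed. Your version is shorter and self-contained; the paper's buys a proof that is ``new'' precisely because it is phrased entirely in terms of periods of $b$-expansions, which is the point of the section.
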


In particular, applying this theorem to the polynomial $Q(X)=X^2-2$ shows that $\sqrt{2}$ is irrational, as well as $\sqrt{n}$ for $n$ not a perfect square (with $Q(X)=X^2-n$), or $\sqrt{2}+\sqrt{3}$ (with $Q(X)=X^4-10X^2+1$).

To apply the second part of the theorem to prove that, for example, $\sqrt[3]{3.57}$ is irrational (with $Q(X)=X^3-3.57$), we have to prove that it is not decimal. For this, we can use the complementary fact that, for any decimal number $\delta$ with exactly $k$ nontrivial digits after the decimal points, $\delta^n$ has exactly $nk$ nontrivial decimal digits after the decimal point. (Indeed, if $k$ is the smallest integer such that $\delta=u/10^k$ for some integer $u$, then, since $10$ is quadratfrei, $\delta^n=u^n/10^{nk}$ cannot be simplified as a fraction with denominator a smaller power of $10$ than $10^{nk}$.) Hence, $\delta:=\sqrt[3]{3.57}$ cannot be in $\D_{\text{\rm ten}}$, otherwise the number $k$ of its digits after the decimal point would satisfy $3k=2$, an impossibility.\footnote{An alternative reasoning would be to consider the last digit of the product.}

In the following, for any rational number $x$, we write $\ell(x)$ for $|\widetilde{P}|$, where $(\delta,\widetilde{P})$ is the DC-representation of $x$ with $\widetilde{P}$ of minimal length.

\begin{proof} We prove first the second part of the theorem. Let $Q\in\D_b[X]$ satisfy the hypotheses, and let $\alpha\in\R$ be a root of $Q$. Assume  $\alpha$  rational, and write $(\delta,\widetilde{P})$ for its DC-expansion in base $b$. It is enough to prove that $\alpha-\delta$ is irrational, so without loss of generality we assume $\delta=0$.

Consider the successive integral powers of $\alpha$. Since $Q$ has dominant coefficient equal to $1$, for any $n\geq q$ there exists values $d_i^{(n)}\in\D_b$ such that ${\displaystyle\alpha^n=\sum_{i=0}^{q-1}d_i^{(n)}\alpha^i}$ (with $q=\deg(Q)$). This implies: 
\begin{equation}
\tag{*} \widetilde{P}^n=\sum_{i=0}^{q-1}d_i^{(n)}\widetilde{P}^i
\end{equation}

(Here, $\widetilde{P}^i$ should not be confused with $\widetilde{P^i}$; by $\widetilde{P}^i$ we mean the iterated product of $\widetilde{P}$ by itself in ${\cal G}^*$.)

To end up the proof, we need the following intermediate result:

\begin{proposition}\label{EllVaALInfini} Let $\widetilde{P}$ be some nontrivial circular word. The sequence $(\ell(\widetilde{P}^n))_n$ goes to infinity.
\end{proposition}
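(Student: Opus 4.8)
The plan is to unwind the statement through the correspondence between circular words and rational numbers of Section~\ref{CircMult}, and then to reduce it to a lower bound on a multiplicative order.

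A nontrivial circular word $\widetilde{P}$ of length $\ell$ corresponds to the rational number $x:=N(P)/(b^{\ell}-1)=0.\overline{P}$, and ``nontrivial'' means precisely that $\widetilde{P}$ is neither $\widetilde{0^{\ell}}$ nor $\widetilde{\beta^{\ell}}$, i.e.\ that $0<x<1$. Writing $x=s/t$ in lowest terms, we then have $t\geqslant 2$, and $\gcd(t,b)=1$ since $t$ divides $b^{\ell}-1$. The product of circular words was defined in Section~\ref{CircMult} exactly so that $\widetilde{W}\times\widetilde{X}$ represents the product of the two rationals attached to $\widetilde{W}$ and $\widetilde{X}$; hence the iterated product $\widetilde{P}^{\,n}$ represents $x^{n}$, and since $0<x<1$ we get $0<x^{n}<1$, with $x^{n}=s^{n}/t^{n}$ still in lowest terms and with denominator prime to $b$. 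In particular $\ell(\widetilde{P}^{\,n})=\ell(x^{n})$.

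I would then identify $\ell(x^{n})$ with the multiplicative order $\Ord_{t^{n}}(b)$ of $b$ modulo $t^{n}$. By Proposition~\ref{UnSurv}, $x^{n}$ has a purely periodic $b$-expansion $0.\overline{Q}$, and a period of length $\lambda$ is possible exactly when $x^{n}(b^{\lambda}-1)\in\Z$, i.e.\ when $t^{n}\mid b^{\lambda}-1$; the positive such $\lambda$ are precisely the positive multiples of a smallest one, $\Ord_{t^{n}}(b)$ (well defined by Corollary~\ref{Yaunell}), so the shortest period has length $\Ord_{t^{n}}(b)$. This is not beaten by a DC-representation with nonzero decimal part either, since subtracting an element of $\D_{b}$ leaves the prime-to-$b$ part of the denominator unchanged. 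Hence $\ell(\widetilde{P}^{\,n})=\Ord_{t^{n}}(b)$.

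The conclusion is then immediate: $b^{\Ord_{t^{n}}(b)}\equiv 1\ (\bmod\ t^{n})$ forces $t^{n}\leqslant b^{\Ord_{t^{n}}(b)}-1<b^{\Ord_{t^{n}}(b)}$, whence $\Ord_{t^{n}}(b)>n\,\frac{\log t}{\log b}\geqslant n\,\frac{\log 2}{\log b}\to +\infty$, using $t\geqslant 2$ and that $b\geqslant 2$ is fixed. I do not foresee a genuine obstacle here; the only points needing a little care are that ``nontrivial'' does force $t\geqslant 2$ (so that the logarithmic estimate bites), and the standard fact, used to compute $\ell(x^{n})$, that the minimal DC-period length of a rational equals the multiplicative order of $b$ modulo the prime-to-$b$ part of its denominator.
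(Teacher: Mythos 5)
Your proof is correct, but it takes a genuinely different route from the paper's. The paper argues softly and combinatorially: since the rational $x$ attached to a nontrivial $\widetilde{P}$ satisfies $0<x<1$, the rationals attached to $(0,\widetilde{P}^n)$ are strictly decreasing, hence pairwise distinct, so the circular words $\widetilde{P}^n$ are pairwise distinct in ${\cal G}^*$; as there are only finitely many circular words of length at most $L$ for each $L$, the pigeonhole principle forces $\ell(\widetilde{P}^n)\to\infty$. You instead push everything through to arithmetic: writing $x=s/t$ in lowest terms with $t\geqslant 2$ prime to $b$, you identify $\ell(\widetilde{P}^n)$ with the multiplicative order $\Ord_{t^n}(b)$ and bound it below by $n\log t/\log b$. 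All the steps you flag do go through (in particular, subtracting $\delta\in\D_b$ indeed leaves the prime-to-$b$ part of the denominator equal to $t^n$, so no DC-representation can have a shorter period than $\Ord_{t^n}(b)$). What your argument buys is an explicit lower bound, linear in $n$, on the growth of the period length — information the paper's purely qualitative distinctness argument cannot give, and which would matter for any effective version of Theorem \ref{IrrationaliteUnitaires}. The price is that you reintroduce fractions in lowest terms and multiplicative orders, which runs somewhat against the paper's programme of staying inside the combinatorics of circular words; this is a stylistic rather than a logical objection, since the paper itself uses fraction-based reasoning elsewhere (e.g.\ in Proposition \ref{CasPartFauxFois} and in the second proof given in Section \ref{AbSt}). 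Both proofs share the same entry point — that $\widetilde{P}^n$ corresponds to $x^n$ with $0<x<1$ — and your intermediate identification of the minimal DC-period with $\Ord_{t^n}(b)$ is the correct sharpening of Proposition \ref{UnSurv} and Corollary \ref{Yaunell}.
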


Note that the sequence in Proposition \ref{EllVaALInfini} may not be strictly increasing. Indeed, for $b=7$ and $\widetilde{P}=\widetilde{15}$, we have $\widetilde{P}^2=\widetilde{03}$, so $\ell(\widetilde{P}^2)=\ell(\widetilde{P})=2$.

\begin{proof}[Proof of Proposition \ref{EllVaALInfini}.] The sequence of rational numbers that corresponds to the sequence $((0,\widetilde{P}^n))_n$ is strictly decreasing to $0$. Hence, in the sequence $(\widetilde{P}^n)_n$ it is impossible to find the same circular word twice. Since for any integer $\ell>0$ the number of circular words of length at most $\ell$ is finite, the pigeonhole principle forces the sequence $(\ell(\widetilde{P}^n))_n$ to go to infinity.\footnote{An alternative reasoning consists in showing that the number of $0$s in $\widetilde{P}^n$ goes to infinity.}\end{proof}

Now observe that the length of the right side of $(*)$ is upper-bounded. Hence, by Theorem \ref{EllVaALInfini}, $\widetilde{P}$ is trivial, so $\alpha\in\D_b$, and the second part of the theorem is proved.

For the first part, for $Q\in\Z[X]$ a unitary polynomial, the second part of the theorem applied to all integers $b>1$ shows that any rational root $\alpha$ of $Q$ belongs to ${\displaystyle\alpha\in\bigcap_{b>1}\D_b}=\Z$.\end{proof}

\section{Conclusion}

In this article, based on history, didactics and mathematics, we were interested in a schema (in the {\sc apos} meaning) of the field $\Q$, in the three stages intra-$\Q$, inter-$\Q$ and trans-$\Q$. The equality $0.\overline{9}=1$ seems to be an important proxy of the level of understanding of decimal expansion of rational numbers, since it draws the frontier between the intra- and inter- stages.

In teaching of mathematics, it is common that periodic decimal expansions of rational numbers remains in general at the stage inter-$\Q$, or even intra-$\Q$, the equality $0.\overline{9}=1$ being interpretated in the framework of real numbers (note that we could also define a schema for $\R$). Yet it seems to us that a complete understanding of the schema $\Q$ is an important prerequisite before the construction of $\R$ (even a partial one). For example, constructions by Dedekind cuts or Cauchy sequences can be interpretated as an action on the object $\Q$, obtained by thematization\footnote{Thematization is the mental mecanism that allows to consider a schema as an object, on which one can make actions.} of the schema. The point is that if the schema thus thematized is not complete as regards the periodic decimal expansions, difficulties may arise, since it is crucial to have in mind the equality $0.\overline{9}=1$. Indeed,  $(-\infty,0.\overline{9}]\cup[1,+\infty)$ is a partition of ${\cal Q}_{\text{\rm WCP}}$ which is not suitable as a cut, and the sequence $0.9$, $0.99$, $0.999$, and so on,  is a Cauchy sequence which would not converge in ${\cal Q}_{\text{\rm WCP}}$. (By the way, some introductory notions of topology can also be presented in $\Q$.)

The set $\R$ can also be defined directly from $\D$, considering infinitely many decimals, as in (Lebesgue, 1938) or (Fardin and Li, 2021) (see also Bronner's notion of {\em idecimality} (Bronner, 1997 and 2005)), but this approach has several issues:
\begin{itemize}

\item there is no (finite) algorithm for operations in $\R$ (despite Fardin an Li's attempt), whereas there are in $\Q$, which can provide some sense to the operations made on infinitely many digits.

\item the Totality stage is compulsory for $\R$, but a numeral writing is not always an Process: Totality (in the sense of {\sc apos}) is not, in general, the encapsulation of a process. Sure, some processes are required for writing numbers like $0.101101110111101111\ldots$ or $0.1234567891011\ldots$, and also for writing numbers defined as a limit, like $\mbox{e}$ or $\pi$. This can be eased in $\Q$ by the periodic parts regarded as Objects.

\item $0.\overline{9}=1$ has therefore to be considered in the (rather elaborate) context of real numbers, whereas it could be treated in $\Q$ first.

\end{itemize}

\end{document}